\documentclass[11pt]{article}
\usepackage[T1]{fontenc}
\usepackage[utf8]{inputenc}
\usepackage[margin=1in]{geometry}
\usepackage{lmodern}
\usepackage{amsmath,amssymb,amsthm,mathtools,bm}
\usepackage{booktabs,array,tabularx,microtype,enumitem}
\usepackage{algorithm,algpseudocode}
\usepackage[authoryear,round]{natbib}
\usepackage{xcolor}
\usepackage{hyperref}
\usepackage[nameinlink,noabbrev]{cleveref}
\hypersetup{hidelinks,pdftitle={Matching Upper and Lower Bounds for Higher-Order Nonconvex Finite-Sum Optimization},pdfsubject={Fixed-order minimax component-oracle complexity}}
\setlist{leftmargin=*,itemsep=2pt,topsep=4pt}
\allowdisplaybreaks[2]
\newcommand{\R}{\mathbb R}
\newcommand{\E}{\mathbb E}
\newcommand{\Prob}{\mathbb P}
\newcommand{\eps}{\varepsilon}
\newcommand{\norm}[1]{\left\lVert #1\right\rVert}
\newcommand{\opnorm}[1]{\left\lVert #1\right\rVert_{\rm op}}
\newcommand{\ip}[2]{\langle #1,#2\rangle}
\newcommand{\ind}{\mathbf 1}
\newcommand{\K}{K_p}
\newcommand{\Ms}{\mathfrak M_p^{\rm ms}}
\newcommand{\Mi}{\mathfrak M_p^{\rm ind}}
\newcommand{\Lip}{\operatorname{Lip}}

\newcommand{\cJ}{\mathcal J}

\providecommand{\doi}[1]{\href{https://doi.org/#1}{doi: \nolinkurl{#1}}}
\newtheorem{theorem}{Theorem}[section]
\newtheorem{lemma}[theorem]{Lemma}
\newtheorem{proposition}[theorem]{Proposition}
\newtheorem{corollary}[theorem]{Corollary}
\theoremstyle{definition}
\newtheorem{definition}[theorem]{Definition}
\theoremstyle{remark}
\newtheorem{remark}[theorem]{Remark}
\crefname{theorem}{Theorem}{Theorems}
\crefname{lemma}{Lemma}{Lemmas}
\crefname{proposition}{Proposition}{Propositions}
\crefname{corollary}{Corollary}{Corollaries}
\crefname{algorithm}{Algorithm}{Algorithms}
\crefname{section}{Section}{Sections}
\title{\textbf{Matching Upper and Lower Bounds for Higher-Order Nonconvex Finite-Sum Optimization}}
\newcommand{\authornames}{Haihan Zhang, Wendao Wu, Chenheng Zhang, Yanyi Li, Chunyuan Zheng, Cong Fang, Haoxuan Li, Zhouchen Lin}
\newcommand{\affiliation}{Peking University}
\newcommand{\emailA}{zhanghaihan@stu.pku.edu.cn}
\newcommand{\emailB}{wuwendao@stu.pku.edu.cn}
\newcommand{\emailC}{chenhengz@stu.pku.edu.cn}
\newcommand{\emailD}{fangcong@pku.edu.cn}
\newcommand{\emailE}{hxli@pku.edu.cn}
\newcommand{\emailF}{ZLIN@pku.edu.cn}
\newcommand{\emailG}{liyanyi26@stu.pku.edu.cn}
\newcommand{\emailH}{cyzheng@stu.pku.edu.cn}
\newcommand{\mail}[1]{\href{mailto:#1}{\texttt{#1}}}
\newif\ifanonymous
\anonymousfalse
\ifanonymous
\author{Anonymous Authors}
\hypersetup{pdfauthor={Anonymous Authors}}
\else
\author{Wendao Wu$^{1,*}$\quad Haihan Zhang$^{1,*}$\quad Chenheng Zhang$^{1,*}$\\[0.20em]
Yanyi Li$^1$\quad Chunyuan Zheng$^1$\\[0.20em]
Cong Fang$^{1,\dagger}$\quad Haoxuan Li$^{1,\dagger}$\quad Zhouchen Lin$^{1,\dagger}$\\[0.60em]
\small $^1$\affiliation\\[0.35em]
\footnotesize\mail{\emailB}\quad\mail{\emailA}\quad\mail{\emailC}\\[-0.05em]
\footnotesize\mail{\emailG}\quad\mail{\emailH}\\[-0.05em]
\footnotesize\mail{\emailD}\quad\mail{\emailE}\quad\mail{\emailF}\\[0.35em]
\small $^*$Equal contribution.\quad $^\dagger$Corresponding authors.}
\hypersetup{pdfauthor={\authornames}}
\fi
\newcommand{\ResearchAgentSystem}{our laboratory's internal auto-research system}
\date{}
\begin{document}
\maketitle
\begin{abstract}
We establish tight randomized higher-order oracle complexity for finding first-order stationary points of nonconvex finite sums. Let $n$ be the number of components, $\Delta>0$ the initial objective-gap bound, $L_p>0$ an individual $p$th-derivative Lipschitz bound, and $\eps>0$ the target gradient norm. For every fixed integer $p\ge2$, the minimax number of exact component queries returning the value and all derivatives through order $p$, with success probability at least $2/3$, is
\[
 \Theta_p\!\left(n+\Delta L_p^{1/p}n^{1-1/(2p)}\eps^{-(p+1)/p}\right),
\]
where the constants depend only on $p$ and the worst case ranges over all finite dimensions. The lower bound holds for unrestricted randomized adaptive algorithms and closes the $\sqrt n$ gap between the previously known general-order upper and lower bounds in their dependence on $n$. We extend dense weak hiding to complete higher-order replies while keeping each component's regularity independent of the chain length. The matching upper bound retains the known finite-sum exponent, requires only mean-squared $p$th-derivative increments, and removes the fixed-confidence logarithmic loss by verifying entire recursive-estimation epochs with exact function values. The characterization includes the additive $n$ term for every positive parameter regime; it counts oracle calls with unrestricted internal computation.
\end{abstract}
\noindent\textbf{Keywords:} Nonconvex finite-sum optimization; higher-order oracles; randomized lower bounds; variance reduction.

\medskip
{\small
\begin{samepage}
\noindent\textbf{AI Usage.}
Nearly the entire research pipeline for this paper was carried out by
\ResearchAgentSystem{}, powered by GPT-5.6 Sol. The system also conducted a
Lean-backed article audit of the resulting manuscript. The authors subsequently reviewed and approved the
mathematical claims, presentation, and formal artifacts, and take
responsibility for the final manuscript. The complete Lean audit report and the system's technical report will be made public at a later date.
\par
\end{samepage}
}
\section{Introduction}\label{sec:intro}
We study the number of higher-order component queries needed to find an $\eps$-stationary point of a nonconvex finite sum
\[
 F(x)=\frac1n\sum_{i=1}^n f_i(x),\qquad x\in\R^d,
 \qquad \|\nabla F(x)\|\le\eps.
\]
Fix an integer $p\ge2$. Each query chooses a component and a point and returns its exact value and all derivatives through order $p$. We assume that each $p$th derivative is $L_p$-Lipschitz and that the initial objective gap is at most $\Delta$, with $\Delta,L_p,\eps>0$. Algorithms may choose both indices and points adaptively and use private randomness. Our question is how the component count $n$ changes the best achievable query complexity under this information model.

For a full objective, regularized Taylor methods attain the accuracy exponent $(p+1)/p$ \citep{birgin2017}, and matching lower bounds cover arbitrary randomized local-oracle algorithms \citep{carmon2020}. Computing a full jet by querying every component at every iteration gives no savings from the finite-sum structure. Taylor control variates provide such savings: \citet{zhougu2022} obtain a dimension-free upper bound with component-count exponent $1-1/(2p)$. Yet the randomized lower bound of \citet{emmenegger2022} has exponent $(p-1)/(2p)$. Writing $K_p=\Delta L_p^{1/p}\eps^{-(p+1)/p}$, the previous general-order comparison is
\begin{equation}\label{eq:prior-gap}
 \Omega_p\!\left(K_p n^{(p-1)/(2p)}\right)
 \qquad\text{versus}\qquad
 \widetilde O_p\!\left(K_p n^{1-1/(2p)}\right).
\end{equation}
Here $n\le c_pK_p^{2p/(p+1)}$ is the earlier lower bound's accuracy range, and the tilde hides confidence-related logarithms. Constants with subscript $p$ depend only on the fixed order. The two powers of $n$ differ by exactly $\sqrt n$. It remains to determine whether adaptive component access can improve this upper exponent, or whether a stronger lower bound makes it necessary.

We prove that the upper exponent is necessary. Let $\mathfrak M_p$ denote the smallest deterministic query cap that guarantees an $\eps$-stationary output with probability at least $2/3$ for every admissible instance, taking the worst case over all finite dimensions. Then
\begin{equation}\label{eq:intro-rate}
 \mathfrak M_p
 =\Theta_p\!\left(n+K_p n^{1-1/(2p)}\right).
\end{equation}
The characterization holds for every $n\ge1$ and all positive $\Delta,L_p,\eps$. The lower bound permits arbitrary query points and outputs, without a span or zero-respecting restriction. The upper bound holds on the larger class defined by a mean-squared increment bound on the $p$th derivatives. Consequently, both regularity classes have the same minimax complexity (\Cref{thm:main}). The main extension is to fixed orders $p\ge3$, with $p=2$ included.

\paragraph{Why complete higher-order replies remain hard.}
The main obstacle is to conceal a direction from every tensor in a query reply while keeping each component individually smooth. We extend the dense weak hiding construction of \citet{peng2026} and its exact-Hessian development in the unpublished companion \citep{companion2026}. A hidden direction is spread across a fixed table of weakly biased component rows. Its exact row average retains the direction, but learning it requires many distinct rows. An exactly flat gate conceals every derivative through order $p$ until the preceding stage opens. The key analytic estimate bounds all mixed $(p+1)$st derivatives without a chain-length factor and with only the inverse first power of the hiding bias. A censored transcript then bounds the information available before the first revealing query, including for adaptive indices and unbounded query points. These two estimates yield the missing $\sqrt n$ factor in the lower bound; \Cref{sec:lower-main} explains their balance.

\paragraph{Matching the rate under second moments.}
The upper method uses the Taylor control variate of \citet{zhougu2022}; its finite-sum exponent is inherited. Our analysis instead controls an entire recursive-estimation epoch with constant conditional probability under second moments. An exact evaluation at the endpoint certifies either stationarity or sufficient decrease, so an inaccurate epoch can be rejected while preserving all accepted progress. Amortizing these full evaluations over $\Theta(\sqrt n)$ steps removes the logarithmic loss at fixed confidence. For failure probability $\delta\in(0,1)$, \Cref{thm:refinements} gives the additional cost $O_p(n\log(1/\delta))$. Values, initialization, rejected trials, and verification are included in the query count. As in the underlying higher-order oracle model, global minimization of a returned Taylor model is permitted internal computation.

\begin{table}[tbp]
\centering\small
\setlength{\tabcolsep}{4pt}
\setlength{\belowcaptionskip}{8pt}
\renewcommand{\arraystretch}{1.2}
\caption{Randomized component-query complexity for first-order stationarity at fixed $p\ge2$. Here $K_p=\Delta L_p^{1/p}\eps^{-(p+1)/p}$. Previous rows display the nontrivial term on the common range $n\le c_pK_p^{2p/(p+1)}$; the tilde hides logarithms. Their upper bound uses derivatives alone and is implementable with our value-inclusive oracle. Our bounds include initialization and hold for all positive parameters, with unrestricted adaptive algorithms and worst-case dimension.}
\label{tab:comparison}
\begin{tabularx}{\linewidth}{@{}>{\raggedright\arraybackslash}p{0.25\linewidth} >{\centering\arraybackslash}p{0.22\linewidth} >{\centering\arraybackslash}p{0.22\linewidth} >{\raggedright\arraybackslash}X@{}}
\toprule
Reference & Upper bound & Lower bound & Regularity \\
\midrule
\citet{emmenegger2022} & --- & $\Omega_p(K_p n^{(p-1)/(2p)})$ & Individual $D^p$ \\
\citet{zhougu2022} & $\widetilde O_p(K_p n^{1-1/(2p)})$ & --- & Individual $D^p$ \\
\midrule
\textbf{This Paper}\newline (\Cref{thm:main}) & \multicolumn{2}{c}{$\boldsymbol{\Theta_p(n+K_p n^{1-1/(2p)})}$} & Individual $D^p$ or mean-squared $D^p$ \\
\bottomrule
\end{tabularx}
\end{table}

The additive $n$ term is necessary even for arbitrarily small positive gap: without a bound on lower-order smoothness, one unknown component can move the stationary region of a quadratic. This complements the chain lower bound and gives full parameter coverage. The exact oracle model and main statements follow in \Cref{sec:model}; the algorithm and lower-bound mechanism are developed in \Cref{sec:algorithm-main,sec:lower-main}, with complete proofs in the appendices.

\section{Related Works}\label{sec:related}
\paragraph{Full-function higher-order methods.}
Cubic regularization establishes the second-order foundation \citep{nesterov2006}; general-order regularized models attain the corresponding $(p+1)/p$ stationary-point exponent \citep{birgin2017}. The lower bounds of \citet{carmon2020} show that unrestricted randomized access to local derivatives cannot improve that exponent on the stated smoothness class. These full-function results determine the accuracy dependence, whereas incremental access introduces the component-count question studied here. \citet{doikov2026} analyze lower-order finite-difference implementations of higher-order methods. Their dimension-dependent cost concerns recovering derivative information for a full objective, a different resource from querying one complete component jet. More recently, \citet{zhou2026firstorder} study deterministic first-order oracle lower bounds under higher-order smoothness. Their oracle returns first-order information for a full objective, whereas our bounds concern randomized incremental access to complete $p$th-order component jets.

\paragraph{Variance reduction and finite-sum higher-order methods.}
SPIDER and PAGE use recursive gradient estimation to exploit mean-squared gradient smoothness \citep{fang2018,li2021}. Variance-reduced cubic and trust-region methods extend estimation ideas to second-order models and second-order stationarity \citep{zhou2019svrc,wang2019svrc,shen2019}; Hessian-only sample counts in these settings differ from the total component-jet cost used here. The general-order Taylor estimator of \citet{zhougu2022} provides our closest algorithmic baseline. Our polynomial-residual recursion is its algebraic reformulation, as shown in \eqref{eq:control-variate-identity}. Their Theorem~5.8 has success probability $1-T\delta_0$ for a per-step failure parameter $\delta_0$; achieving fixed overall confidence introduces a logarithmic dependence on the horizon. Our epoch analysis avoids that simultaneous per-step guarantee. Unified analyses of stochastic cubic methods provide complementary choices of estimators and lazy updates \citep{chayti2024}.

For $p=2$, \citet{pasechnyuk2026} report an $n+\widetilde O(n^{1/2}\eps^{-3/2})$ total oracle bound, suppressing other problem constants, under both mean-squared gradient smoothness and third-moment Hessian smoothness. The additional gradient-smoothness bound restricts a lower-order parameter that is unbounded in our class. Their rate therefore concerns a different minimax problem and is not a uniform upper bound under our higher-order assumption alone.

\paragraph{Randomized finite-sum lower bounds.}
\citet{emmenegger2022} establish higher-order lower bounds for randomized algorithms without a span restriction. Their individual-smoothness result is the quantitative baseline in \eqref{eq:prior-gap}; their third-moment Hessian condition defines a separate, larger class with a stronger second-order lower bound. \citet{peng2026} close the first-order individual-smoothness gap using fixed-table dense weak hiding. Our construction preserves that mechanism and supplies the higher-order derivative estimate and complete-jet transcript reduction. The unpublished companion \citep{companion2026} develops the exact-Hessian flat-gate and censored-revelation argument for $p=2$. Relative to it, the present contribution is the fixed-$p\ge3$ extension and the second-moment, log-free upper analysis. We cite this unpublished manuscript for attribution; no theorem from it is used as a black box, and all required construction and information lemmas are proved here.

\paragraph{Probabilistic models and exact acceptance tests.}
Trust-region and regularization methods can safeguard models that are accurate only with constant probability by testing actual objective decrease \citep{bandeira2014,cartis2018}. Our verification mechanism applies this established principle to an entire finite-sum epoch. Its role is to make a full snapshot and a full endpoint evaluation support $\Theta(\sqrt n)$ recursive steps. A dimension-free second-moment estimate controls the epoch, while exact acceptance tests preserve objective decrease through failed trials. The resulting bound counts all these calls and imposes a deterministic total query cap.

\section{Oracle Model and Main Results}\label{sec:model}
By translation the initial point is $0$. An instance in dimension $d$ consists of $n$ functions $f_i\in C^p(\R^d)$ such that
\begin{equation}\label{eq:model}
 F=\frac1n\sum_{i=1}^n f_i,\qquad
 F(0)-\inf_{x\in\R^d}F(x)\le\Delta<\infty.
\end{equation}
For a $k$-linear tensor $A$, we use
$\|A\|_{\mathrm{op}}=\sup_{\|h_1\|,\ldots,\|h_k\|\le1}|A[h_1,\ldots,h_k]|$.
The \emph{individual} regularity class satisfies
\begin{equation}\label{eq:individual}
 \opnorm{\nabla^p f_i(x)-\nabla^p f_i(y)}\le L_p\norm{x-y}
 \quad\text{for every }i,x,y.
\end{equation}
The \emph{mean-squared} class instead satisfies
\begin{equation}\label{eq:ms}
 \left(\frac1n\sum_{i=1}^n
 \opnorm{\nabla^p f_i(x)-\nabla^p f_i(y)}^2\right)^{1/2}
 \le L_p\norm{x-y}\quad\text{for every }x,y.
\end{equation}
The first condition implies the second with the same $L_p$; for $n\ge2$ the inclusion is strict (Appendix~\ref{sec:strictness}). Neither condition bounds lower-order derivative magnitudes or their offsets across components. Lower boundedness is required only of $F$, although our lower-bound components are themselves bounded below.

\begin{definition}[Exact incremental $p$-jet complexity]
A query selects $i\in[n]$ and $x\in\R^d$ and returns
\[
 \cJ_i^p(x)=\bigl(f_i(x),\nabla f_i(x),\ldots,\nabla^p f_i(x)\bigr).
\]
Each complete reply costs one query. An algorithm's indices, points, stopping decisions, and output are arbitrary measurable functions of previous replies and an independent private random tape. Internal computation is unrestricted. For either regularity class, its minimax complexity is the smallest deterministic query budget that guarantees an output $\widehat x$ with $\norm{\nabla F(\widehat x)}\le\eps$ with probability at least $2/3$, uniformly over all finite $d$ and all admissible instances. Denote the two complexities by $\Mi$ and $\Ms$.
\end{definition}

Set
\[
 K_p=\Delta L_p^{1/p}\eps^{-(p+1)/p},\qquad S_p=n+K_p n^{1-1/(2p)}.
\]
\begin{theorem}[Fixed-order minimax characterization]\label[theorem]{thm:main}
For every fixed integer $p\ge2$, there exist constants $c_p,C_p>0$ depending only on $p$ such that, for all $n\ge1$ and $\Delta,L_p,\eps>0$,
\begin{equation}\label{eq:main}
 c_pS_p\le\Mi\le\Ms\le C_pS_p.
\end{equation}
The lower bound holds for arbitrary randomized adaptive algorithms, including unbounded queries and unqueried outputs. The upper bound has success probability at least $31/32$ with a deterministic budget. If $\Delta=0$, zero queries suffice; if $\Delta>0$ and $L_p=0$, both complexities are $\Theta(n)$.
\end{theorem}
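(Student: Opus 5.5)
The proof splits into three parts. The middle inequality $\Mi\le\Ms$ is immediate, because the individual class \eqref{eq:individual} is contained in the mean-squared class \eqref{eq:ms} with the same $L_p$. The degenerate cases are short. If $\Delta=0$, then $0$ is a global minimizer of $F$, so $\nabla F(0)=0$ and outputting $0$ costs nothing. If $L_p=0$, every $f_i$ is a polynomial of degree at most $p$, and querying each component once at $0$ recovers $F$ exactly. An unbounded-below polynomial is excluded by \eqref{eq:model}, so $F$ attains its infimum and internal computation finds an exact stationary point with $n$ queries. The $\Omega(n)$ bound for $L_p=0$ comes from the quadratic argument given below. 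What remains is the upper bound $\Ms\le C_pS_p$ and the lower bound $\Mi\ge c_pS_p$, and for the lower bound I prove the two terms $n$ and $K_pn^{1-1/(2p)}$ separately.

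\textbf{Upper bound.} I would run the Taylor control-variate recursion of \citet{zhougu2022} in epochs of length $m\asymp\sqrt n$. Each epoch starts with a full snapshot costing $n$ queries. Each step then regularizes the $p$th-order Taylor model with coefficient $\asymp L_p$ and forms a minibatch of size $b\asymp\sqrt n$ whose polynomial residuals correct the estimated model. The key estimate is a dimension-free second-moment bound. Using \eqref{eq:ms} and the integral form of the Taylor remainder, the residual of component $i$ over a step of length $r$ has mean-square gradient error $\lesssim L_p^2r^{2p}$. The recursion is a martingale, so the accumulated error over an epoch has second moment $\lesssim (m/b)L_p^2\sum r_t^{2p}$. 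By Chebyshev, with constant conditional probability every step in the epoch has model error below $\eps/4$. On that event, the standard regularized-Taylor analysis gives, for each step, either an $\eps$-stationary point or a decrease of at least $\asymp L_p^{-1/p}\eps^{(p+1)/p}$.

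At the end of each epoch I evaluate all $n$ components exactly, using $n$ queries for $F$ and $\nabla F$. This either certifies stationarity or confirms the predicted decrease. If the check fails, I reject the epoch and restart from the last accepted point, so rejected epochs never lose progress. The number of accepted epochs is at most $\Delta/(m L_p^{-1/p}\eps^{(p+1)/p})+1$, and each epoch costs $O(n+mb)=O(n)$ queries. This gives $O(n+K_p n\,m^{-1})=O(n+K_pn^{1-1/(2p)})$ accepted cost. For $p$ general the minibatch and epoch sizes must be rebalanced so that the $n^{1-1/(2p)}$ exponent appears; here I would follow \citet{zhougu2022}'s choice of $b$ as a function of $n$ and $p$ and only replace their high-probability union bound by the epoch-level Chebyshev argument. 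The number of rejections is dominated by a geometric variable, so a Markov-type argument together with a deterministic cap of a constant multiple of the expected budget yields success probability $31/32$.

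\textbf{Lower bound, term $n$.} I take all $f_i$ equal to $\tfrac{a}{2}\|x\|^2$ except one hidden index $j$, which additionally carries a linear term $-n\langle g,x\rangle$ with $\|g\|=2\eps$. Then $\nabla F$ vanishes only near $g/a$. Choosing $a$ large makes $\Delta$ arbitrarily small, and $\nabla^p f_i$ is constant, so the instance lies in the class for every $L_p\ge0$. Randomizing $j$ and $g$, any algorithm that makes fewer than $n/3$ queries misses index $j$ with probability greater than $2/3$. It then has no information about $g$, and a random direction in high dimension defeats its output.

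\textbf{Lower bound, term $K_pn^{1-1/(2p)}$. This is the main obstacle.} I would adapt the dense weak hiding chain of \citet{peng2026}. Fix a chain of $T$ orthogonal hidden directions, and for each stage spread the next direction across a fixed table of $n$ component rows with bias $\beta\asymp n^{-1/2}$. A flat gate, identically zero with all derivatives through order $p$ until the previous stage's coordinate exceeds a threshold, keeps the whole jet uninformative before that stage opens. Two estimates are needed. The first is analytic: the mixed $(p+1)$st derivatives of each component must be bounded by $C_p\beta^{-1}$ times the scale, with no dependence on $T$. I would prove this by showing that only $O_p(1)$ gate-active stages overlap at any point, and by bounding the derivatives of the biased row-product directly rather than by a product rule over the chain. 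The second is information-theoretic: a censored transcript argument showing that revealing stage $t+1$ requires $\Omega(\beta^{-2})=\Omega(n)$ queries to distinct rows, even with adaptive indices and unbounded points. Rescaling with Lipschitz constant $L_p$ and $\Delta\asymp T\cdot L_p\lambda^{p+1}\beta^{-1}$ (up to $p$-dependent constants), where $\lambda$ is the length scale, gives gradient $\eps$ at every unfinished point. The total cost is then $T\cdot n\cdot$(per-stage factor). Balancing $\lambda$ and $T$ against $\Delta$ and $\eps$ converts the $\beta^{-1}=\sqrt n$ smoothness loss into the factor $n^{1-1/(2p)}K_p$. Getting only a first power of $\beta^{-1}$ in the derivative bound is precisely what closes the $\sqrt n$ gap relative to \citet{emmenegger2022}. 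Finally, a high-dimensional random rotation makes unqueried outputs useless.
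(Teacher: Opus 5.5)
You have the paper's architecture: the class inclusion, verified epochs with exact endpoint checks and rejection, a hidden-index quadratic for the $n$ term, and dense weak hiding for the chain term. But the upper-bound calculation rests on a false estimate. The residual $Y_i=\nabla f_i(x_t+s)-\nabla f_i(x_t)-\nabla P_i(x_t+s)+\nabla P_i(x_t)=\int_0^1[\nabla^2f_i-\nabla^2P_i](x_t+\tau s)\,s\,d\tau$ is governed by the Hessian Taylor error at distance $\norm{x_t-z}$ from the snapshot, not by the step length alone. Already for $p=2$ you only have $\norm{Y_i}\le L_2(\norm{x_t-z}+\norm{s})\norm{s}$. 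Within an epoch $\norm{x_t-z}$ reaches $mr$, so the correct bound is $\E_i\norm{Y_i}^2\lesssim L_p^2(mr)^{2(p-1)}r^2$. Likewise, the snapshot model's higher derivatives err by $\asymp L_p(mr)^{p+1-j}$. With regularization $\asymp L_p$ and $r\asymp(\eps/L_p)^{1/p}$ the model is therefore not accurate on your good event.

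Your own count shows the problem: it gives $O(n+\K n/m)=O(n+\K\sqrt n)$, not $O(n+\K n^{1-1/(2p)})$, and that would contradict the lower bound you are proving. The repair is not a rebalancing of batch or epoch sizes; the paper keeps $m\asymp B\asymp\sqrt n$. What is needed is regularization $M\asymp L_pm^{p-1}$, i.e.\ $r^p\asymp\eps/(L_pm^{p-1})$. Then the per-step decrease is $\asymp L_p^{-1/p}\eps^{(p+1)/p}m^{-(p-1)/p}$, the number of accepted epochs is $O_p(1+\K m^{-1/p})$, and $m\asymp\sqrt n$ gives $n+\K n^{1-1/(2p)}$. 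That factor $m^{-(p-1)/p}$ is exactly where the exponent comes from, so deferring it to \citet{zhougu2022}'s choice of batch size leaves the central step unproved. Separately, controlling every step of an epoch at once needs Doob's maximal inequality for the submartingale $\norm{e_t}^2$. Per-step Chebyshev plus a union bound loses a factor $m$, since $m/b$ is constant.

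In the chain lower bound, your mechanism for $T$-independence, that only $O_p(1)$ gates are active at any point, cannot hold for a chain with the required barrier. Small-gradient points must have every stage open, so near the end of the chain all earlier gates equal one at once. Each weak factor then contributes $(p+1)$st derivatives of size $\beta^{-1}$, and summing over links gives $T/\beta$. The paper's argument is different: each link depends only on two adjacent blocks, so its $(p+1)$-linear form is at most $C_p\beta^{-1}\prod_\ell(\norm{h^\ell_{j-1}}+\norm{h^\ell_j})$. H\"older over $j$, with $\|a^\ell\|_{p+1}\le\|a^\ell\|_2\le2\norm{h^\ell}$, then removes $T$.

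Your scaling relation should read $\Delta\asymp TL_p\beta\lambda^{p+1}$, because the amplitude must shrink by the bias to keep $\Lip(\nabla^pf_i)\le L_p$. This gives $T\asymp\K\beta^{1/p}$ and $Tn\asymp\K n^{1-1/(2p)}$; with $\beta^{-1}$ the Lipschitz constant is off by a factor $\beta^{-2}=n$.

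For bounded $n$ the bias $n^{-1/2}$ is not admissible in the hiding argument, so you need a separate $\Omega_p(\K)$ bound; the paper uses identical chains with $\rho=1$.

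Finally, for $L_p=0$ a bounded-below polynomial need not attain its infimum once $p\ge4$, e.g.\ $x_1^2+(x_1x_2-1)^2$. You only need an $\eps$-stationary point, which exists by minimizing $F+\eta\norm{x}^2/2$ with $\eta=\eps^2/(8\Delta)$.
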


For $p=3$ and $p=4$, the common rates are respectively
\[
 \Theta\!\left(n+\Delta L_3^{1/3}n^{5/6}\eps^{-4/3}\right),\qquad
 \Theta\!\left(n+\Delta L_4^{1/4}n^{7/8}\eps^{-5/4}\right).
\]
The constants are dimension-free, not uniform in an order $p$ that grows with the other parameters.

\begin{corollary}[Higher moment classes]\label[corollary]{cor:moments}
For any fixed or variable $q\in[2,\infty]$, replace \eqref{eq:ms} by the normalized $q$th-moment bound with the same $L_p$ (the maximum when $q=\infty$). Its fixed-confidence minimax complexity is $\Theta_p(S_p)$, with constants independent of $q$.
\end{corollary}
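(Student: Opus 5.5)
Corollary~\ref{cor:moments} follows from \Cref{thm:main} by sandwiching the $q$th-moment class between the two classes already characterized there. Write $\mathfrak M_p^{(q)}$ for the fixed-confidence minimax complexity of the class defined by
\[
 \Bigl(\frac1n\sum_{i=1}^n \opnorm{\nabla^p f_i(x)-\nabla^p f_i(y)}^q\Bigr)^{1/q}\le L_p\norm{x-y}
\]
for all $x,y$, with the maximum over $i$ when $q=\infty$.

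The key observation is monotonicity of normalized power means. Fix $x,y$ and set $a_i=\opnorm{\nabla^p f_i(x)-\nabla^p f_i(y)}$. For the uniform probability measure on $[n]$, the map $q\mapsto(\frac1n\sum_i a_i^q)^{1/q}$ is nondecreasing on $[2,\infty]$ by Jensen's (Lyapunov's) inequality, and at $q=\infty$ it equals $\max_i a_i$. It follows that:
\begin{itemize}
\item the individual class \eqref{eq:individual} is exactly the $q=\infty$ class;
\item every $q$th-moment class with $q\in[2,\infty]$ is contained in the mean-squared class \eqref{eq:ms}, which is the case $q=2$;
\item every such class contains the individual class.
\end{itemize}
All of these inclusions hold with the same constant $L_p$. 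The remaining constraints, namely the gap bound \eqref{eq:model}, the oracle, and the success criterion, do not depend on $q$.

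Minimax complexity is monotone under inclusion of instance classes. An algorithm with a deterministic budget that succeeds with probability at least $2/3$ on a larger class also succeeds on every subclass. Conversely, a hard instance family inside a subclass is also hard for the larger class. Hence
\[
 \Mi\le\mathfrak M_p^{(q)}\le\Ms .
\]
Combining this with \eqref{eq:main} gives $c_pS_p\le\mathfrak M_p^{(q)}\le C_pS_p$. The constants $c_p,C_p$ come from \Cref{thm:main} and involve no $q$, so the bound holds uniformly in $q$. This also covers $q$ that varies with $n,\Delta,L_p,\eps$, since each parameter tuple is handled by the same sandwich. The degenerate cases $\Delta=0$ and $L_p=0$ carry over verbatim through the same inclusions.

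There is no real obstacle once \Cref{thm:main} is available; the only points needing care are these:
\begin{itemize}
\item The comparison must use normalized means $\frac1n\sum_i$, so that no factor of $n^{1/q}$ appears.
\item The lower-bound construction of \Cref{thm:main} must lie in the individual class with the same $L_p$, as stated there, so that it sits inside every $q$-class.
\item The upper-bound algorithm must use only the mean-squared bound \eqref{eq:ms}, which is also asserted in \Cref{thm:main}.
\end{itemize}
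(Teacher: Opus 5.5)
Your proof is correct and is essentially the paper's own argument. Monotonicity of normalized power means places every $q$th-moment class between the individual class and the mean-squared class with the same $L_p$, and the sandwich $c_pS_p\le\Mi\le\Ms\le C_pS_p$ from \Cref{thm:main} then yields $\Theta_p(S_p)$ with constants independent of $q$.
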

\begin{proof}
Normalized moments are nondecreasing in their order. Every such class contains the individual class and is contained in the mean-squared class. Apply \eqref{eq:main}.
\end{proof}
This includes third-moment Hessian smoothness at $p=2$, but does not assert that second moments are a necessary threshold.

\begin{theorem}[Confidence and lower-bound dimension]\label[theorem]{thm:refinements}
Under \eqref{eq:ms}, for every $\delta\in(0,1)$, success probability $1-\delta$ is achievable with
\begin{equation}\label{eq:confidence-main}
 O_p\!\left(K_p n^{1-1/(2p)}+n[1+\log(1/\delta)]\right)
\end{equation}
queries on every path. The lower bound in \Cref{thm:main} can be witnessed in dimension
\begin{equation}\label{eq:dimension-main}
 d\le C_p(1+K_p n^{-1/(2p)})
       \log(2+n+K_p n^{1-1/(2p)}).
\end{equation}
No matching dependence on $\delta$ is claimed.
\end{theorem}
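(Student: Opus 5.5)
The statement has two independent parts: a confidence amplification for the upper algorithm, and a dimension count for the lower-bound instance. I would prove them separately.

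\emph{Confidence bound \eqref{eq:confidence-main}.} I take the epoch-verified method behind the upper half of \Cref{thm:main}. It has a deterministic budget $C_pS_p$ and success probability at least $31/32$. I would not amplify by running $\log(1/\delta)$ independent copies, since that would multiply the $K_pn^{1-1/(2p)}$ term by $\log(1/\delta)$. Instead I amplify at the level of epochs. Each epoch begins with a full snapshot ($n$ queries) and runs $\Theta(\sqrt n)$ recursive Taylor-control-variate steps. It ends with an exact full evaluation of the value and gradient ($2n$ queries). That evaluation either certifies $\|\nabla F\|\le\eps$ at the endpoint, or certifies a decrease of at least $c_p\sqrt n\,\eps^{(p+1)/p}L_p^{-1/p}$, or rejects. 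The key input from the epoch analysis is that, conditionally on the past, an epoch is accepted with probability at least some $q_p>0$. Rejected epochs do not increase $F$, because we return to the stored snapshot. Accepted non-stationary epochs decrease $F$ by the stated amount, so at most $N=\lceil K_p/(c_p\sqrt n)\rceil$ of them can occur before stationarity is certified.

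\emph{Counting successes.} The acceptance indicators dominate i.i.d.\ Bernoulli$(q_p)$ variables. A Chernoff bound then gives the following: among $M=\lceil (2N+C\log(1/\delta))/q_p\rceil$ epochs, fewer than $N$ are accepted with probability at most $\delta$. The total cost is $M$ epochs, each costing $O_p(n)$ queries. Each step queries a minibatch of size $\Theta(\sqrt n)$ per step in the $n^{1-1/(2p)}$ scaling, so one epoch costs $O_p(n + \sqrt n\cdot b)$ with $b$ the minibatch size. This gives $O_p(K_pn^{1-1/(2p)}+n[1+\log(1/\delta)])$. To keep the path-wise cap deterministic, I stop after $M$ epochs, and if no certificate has been found I output the last verified point. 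The $n$ term covers the regime $N=0$, which is where the initialization cost sits.

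\emph{Dimension bound \eqref{eq:dimension-main}.} I would audit the lower-bound construction of \Cref{sec:lower-main}. The chain has $T\asymp 1+K_pn^{-1/(2p)}$ stages; this count comes from balancing the per-stage gap against $\Delta$, as in the lower-bound balance. Each stage hides its direction in a block. Its dimension only needs to be large enough that a random unit direction is nearly orthogonal to every query made before revelation, for all $O(S_p)$ queries, with a union bound. A Gaussian concentration bound $|\langle u,x\rangle|\le\|x\|\sqrt{\log(\cdot)/k}$ then requires block dimension $k\asymp\log(2+n+S_p)$. Unbounded query points are handled by the flat gate, exactly as in the censored transcript argument. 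Summing over the $T$ blocks gives \eqref{eq:dimension-main}. The additive-$n$ instance (the quadratic) lives in dimension $1$, which is absorbed.

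\emph{Main obstacle.} I expect the hard step to be verifying that the censored-transcript estimate tolerates block dimension only logarithmic in the query budget. It must do so uniformly over adaptively chosen indices and the fixed table of weakly biased rows, since the dense hiding may implicitly have used $\mathrm{poly}$-sized blocks. I would first try to re-derive the per-query revelation probability as $\exp(-ck)$ times a bias factor and then union-bound over the queries. If the bias-dependent constant forces extra dimension, I would fall back to a factor of $\log$ of that bias. That bias is polynomial in $n$ and $S_p$, so the fallback still preserves the stated logarithm.
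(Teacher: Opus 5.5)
Your architecture matches the paper's, but two steps are wrong as written. For confidence, you amplify at the level of verified trial epochs rather than by restarting. The paper's \Cref{prop:confidence} does the same, using $\E 2^{Z_r}\le(65/64)^r$ with cap $R_\delta=2H+\lceil4\log(1/\delta)\rceil$; this is equivalent to your Bernoulli domination plus Chernoff. For dimension, you take $d=TD$ with logarithmic block dimension, as the paper does.

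\emph{The per-epoch decrease is wrong.} An accepted epoch does not certify a decrease of $c_p\sqrt n\,\eps^{(p+1)/p}L_p^{-1/p}$. The regularization is inflated to $M\propto L_pm^{p-1}$ to absorb the error $\sigma=L_pm^{p-1}r^p/(p-1)!$ of the batch-$64m$ recursive estimator. Hence $d_0\propto Mr^{p+1}\propto\eps^{(p+1)/p}M^{-1/p}$, and by \eqref{eq:epoch-scaling} $md_0\asymp_p n^{1/(2p)}\eps^{(p+1)/p}L_p^{-1/p}$. The number of accepted epochs is therefore $H\asymp_p 1+K_pn^{-1/(2p)}$, not $K_p/\sqrt n$. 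With your value the total would be $O(n+K_p\sqrt n+n\log(1/\delta))$, which for large $n$ and $K_p$ beats the lower bound of \Cref{thm:main} at $\delta=1/3$. So your ``key input'' is false. The stated rate is recovered only after correcting $H$: $O(n)$ queries per trial times $O(H+\log(1/\delta))$ trials. Also, the event of conditional probability at least $63/64$ is that the trial is certified \emph{or} accepted, not that it is accepted.

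\emph{The block-dimension argument has a gap.} Your union-bound mechanism (a hidden direction nearly orthogonal to every query) applies only to stages strictly beyond the current one, which keep their prior by \Cref{lem:prefix-freshness}. For the current stage, queries are chosen after seeing rows of $S^{(j)}$ that are correlated with $\Theta^{(j)}$, so no such union bound is available. This is exactly the step you leave open, and a per-query $e^{-ck}$ bound with a bias correction is not what is needed. \Cref{lem:row-info} closes it in three parts:
\begin{enumerate}
\item The censored transcript stopped at $r_\rho$ distinct rows carries $I(\Theta;\mathcal T_{r_\rho})\le Dr_\rho\rho^2\le D/4096$.
\item Under the product law, the alignment probability is at most $B_*e^{-D/128}$.
\item Data processing then gives $u\le(D/4096+\log2)/(D/128-\log B_*)<1/8$.
\end{enumerate}
Both the information and the tail exponent are linear in $D$, so the bias cancels and only $D\ge2048+512\log(B_*+2)$ is required. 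Together with \eqref{eq:future} this is \eqref{eq:D-final}. After that the dimension claim is bookkeeping: $d=TD$ with $N\lesssim K_pn^{1-1/(2p)}$ and $T\lesssim K_pn^{-1/(2p)}$, plus the constant-$n$ identical-copy case and the one-dimensional hidden-index instance.

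Finally, unbounded query points are handled by $\norm{\Psi(x)}<1$, which makes the Rademacher tail $e^{-a^2D/2}$ independent of $\norm x$. They are not handled by the flat gate, which only ensures exact jet truncation. A tail bound scaled by $\norm x$, as in your Gaussian estimate, would fail for large queries.
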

Proofs of the main and refinement theorems appear in Appendices~\ref{sec:upper}--\ref{sec:scaling}. We next explain the algorithm and the two matching balances.

\section{Verified Taylor-Residual Epochs}\label{sec:algorithm-main}
At a snapshot $z$, one full pass stores the component Taylor polynomials
\[
 P_i(y)=\sum_{j=0}^p\frac1{j!}\nabla^j f_i(z)[(y-z)^{\otimes j}],
 \qquad P=\frac1n\sum_iP_i.
\]
Higher derivatives within an epoch are evaluated from $P$ without further queries. Only the gradient is tracked recursively. After selecting a step $s_t$ and putting $x_{t+1}=x_t+s_t$, use fresh uniform indices in
\begin{equation}\label{eq:recursion-main}
 \begin{split}
 v_{t+1}={}&v_t+\nabla P(x_{t+1})-\nabla P(x_t)\\
 &+\frac1B\sum_{b=1}^B
 \bigl[\nabla f_{I_{t,b}}(x_{t+1})-\nabla f_{I_{t,b}}(x_t)
       -\nabla P_{I_{t,b}}(x_{t+1})+\nabla P_{I_{t,b}}(x_t)\bigr].
 \end{split}
\end{equation}
The sampled residuals use two queries each. Storing complete snapshot jets makes every Taylor correction available at no additional oracle cost.

For all trial epochs choose
\begin{equation}\label{eq:main-parameters}
 \begin{gathered}
 m=\lceil\sqrt n\rceil,\quad B=64m,\quad
 M=4(p^2+2^{p+1})L_pm^{p-1},\\
 r^p=\frac{p!\eps}{4M},\qquad
 d_0=\frac{Mr^{p+1}}{2(p+1)!},\qquad
 H=\left\lfloor\frac{\Delta}{md_0}\right\rfloor+1.
 \end{gathered}
\end{equation}
The step is a global minimizer, on the closed ball of radius $r$, of
\begin{equation}\label{eq:model-main}
 \langle v_t,s\rangle+
 \sum_{j=2}^p\frac1{j!}\nabla^jP(x_t)[s^{\otimes j}]
 +\frac{M}{(p+1)!}\|s\|^{p+1}.
\end{equation}
A measurable minimizer exists. Global model solution is permitted internal computation; this is where the oracle model deliberately differs from an arithmetic-time model.

\begin{algorithm}[t]
\caption{Verified Taylor-residual epochs (fixed confidence)}\label{alg:verified}
\begin{algorithmic}[1]
\Require $n,p,\Delta,L_p,\eps>0$; parameters in \eqref{eq:main-parameters}
\State $z\gets0$
\For{$\ell=1,\ldots,2H$}
  \State Query every component at $z$; form $P_i,P$, $F(z)$, and $v\gets\nabla F(z)$.
  \State $x\gets z$; $\mathsf{full}\gets\mathsf{true}$
  \For{$t=0,\ldots,m-1$}
    \State Select a global minimizer $s$ of \eqref{eq:model-main}; set $y\gets x+s$.
    \If{$\|s\|<r$}
       \State $\mathsf{full}\gets\mathsf{false}$; \textbf{break}
    \EndIf
    \If{$t<m-1$}
       \State Sample $B$ fresh uniform indices and update $v$ by \eqref{eq:recursion-main}.
    \EndIf
    \State $x\gets y$
  \EndFor
  \State Query every component at $y$; compute exact $F(y)$ and $\nabla F(y)$.
  \If{$\|\nabla F(y)\|\le\eps$}
     \State \Return $y$
  \EndIf
  \If{$\mathsf{full}$ and $F(y)\le F(z)-md_0$}
     \State $z\gets y$ \Comment{Accept only verified progress.}
  \EndIf
  \Statex \hspace{\algorithmicindent}\textit{Otherwise keep $z$; the next trial uses fresh samples.}
\EndFor
\State \Return $z$ \Comment{The cap is reached with probability at most $1/32$.}
\end{algorithmic}
\end{algorithm}

\paragraph{Why second moments suffice.}
With steps bounded by $r$ and epoch length $m$, the residual's conditional second moment is at most
\[
 \sigma^2,\qquad \sigma=\frac{L_pm^{p-1}r^p}{(p-1)!}.
\]
The error $e_t=v_t-\nabla F(x_t)$ is a vector martingale starting at zero. Its terminal second moment is at most $m\sigma^2/B$, so a maximal inequality gives
\[
 \Prob\!\left(\max_{t\le m}\|e_t\|>\sigma\mid\text{starting history}\right)
 \le \frac mB=\frac1{64}.
\]
On the complementary event, every boundary step decreases the objective by at least $d_0$, and every interior step has an endpoint with gradient norm at most $\eps/2$. These statements follow from deterministic Taylor error bounds; no random tensor concentration is used.

\paragraph{Why errors do not accumulate across epochs.}
Every output from the stationarity test is exactly certified. Every accepted nonterminal epoch has real objective decrease at least $md_0$, whether or not its estimates were correct. Every good epoch either terminates or is accepted. Hence nontermination after $2H$ trials requires at least $H$ bad trials, although their expected number is at most $2H/64$. Markov's inequality gives failure probability at most $1/32$. This argument uses conditional bounds, not independence of epoch outcomes.

\paragraph{The upper-bound balance.}
Each trial costs at most $2n+2Bm=O(n)$ queries, counting endpoint verification. The number of successful epochs that the objective gap allows is
\[
 1+\frac{\Delta}{md_0}=O_p(1+K_pm^{-1/p}).
\]
Substituting $m\asymp\sqrt n$ gives $O_p(n+K_p n^{1-1/(2p)})$. Appendix~\ref{sec:upper} proves the deterministic query cap and the confidence refinement.

\section{Higher-Order Dense Weak Hiding}\label{sec:lower-main}
The lower bound in \Cref{thm:main} requires three properties: an unfinished stage must force a large gradient, opening a stage must require many distinct component rows, and the entire construction must remain individually $p$th-order smooth. We summarize how these requirements fit together; \Cref{lem:base,prop:sequential} establish the analytic and information bounds, and \Cref{sec:scaling} combines them.

\subsection{A fixed-table radial chain}
For each stage $j\in[T]$, choose an independent uniform hidden vector $\Theta^{(j)}\in\{\pm1\}^D$. Given this vector, sample a fixed sign table $S^{(j)}\in\{\pm1\}^{n\times D}$ whose columns have exact sums $n\rho\Theta^{(j)}_k$. The bias $\rho$ is chosen on the parity-compatible grid. Write
\[
 \Psi(z)=\frac{z}{\sqrt{1+\|z\|^2}},\quad
 q_j(z)=\left\langle\Theta^{(j)}/\sqrt D,\Psi(z)\right\rangle,\quad
 \widetilde q_{ij}(z)=\rho^{-1}\left\langle S^{(j)}_{i:}/\sqrt D,\Psi(z)\right\rangle.
\]
Thus $n^{-1}\sum_i\widetilde q_{ij}=q_j$ exactly, not in expectation.

Let $a=1/8$, $b=1/4$, and let $G_p$ be a nondecreasing gate equal to zero on $(-\infty,a]$, equal to one on $[b,\infty)$, and flat through order $p+1$ at both interfaces. A normalized integral of $t^{p+1}(1-t)^{p+1}$ gives an explicit degree-$(2p+3)$ transition polynomial. Define
\begin{equation}\label{eq:hard-main}
 R_i(x)=-\widetilde q_{i1}(x_1)
 -\sum_{j=2}^T G_p(q_{j-1}(x_{j-1}))\bigl(\widetilde q_{ij}(x_j)+1\bigr)
 +\sum_{j=1}^T\|x_j\|^2.
\end{equation}
The average $R$ replaces each $\widetilde q_{ij}$ by $q_j$. Its three analytical properties are
\begin{equation}\label{eq:hard-properties-main}
 \begin{gathered}
 R(0)-\inf R\le2T,\qquad
 \min_jq_j(x_j)<b\ \Longrightarrow\ \|\nabla R(x)\|>1/3,\\
 \Lip(\nabla^pR_i)\le H_{p+1}/\rho,
 \qquad \log H_{p+1}=O(p\log(p+2)).
 \end{gathered}
\end{equation}
The gradient statement uses the first unfinished block. Its gradient is $2x_j-c\nabla q_j(x_j)$ for some $c\ge1$, whose norm is bounded below uniformly whenever $q_j\le b$.

\subsection{Why the derivative estimate is dimension-free}
All fixed-order derivatives of a radial coordinate have dimension-free bounds. For a link involving blocks $j-1,j$, the product rule gives, with $k=p+1$,
\[
 |D^k[ G_p(q_{j-1})(\widetilde q_{ij}+1)]
 [h^1,\ldots,h^k]|
 \le\frac{C_{p,k}}\rho
 \prod_{\ell=1}^k\bigl(\|h^\ell_{j-1}\|+\|h^\ell_j\|\bigr).
\]
A direct triangle bound would introduce $T$. Instead set $a_j^\ell=\|h^\ell_{j-1}\|+\|h^\ell_j\|$. Finite overlap and $k\ge2$ imply
\[
 \sum_j\prod_{\ell=1}^k a_j^\ell
 \le\prod_{\ell=1}^k\|a^\ell\|_{\ell_k}
 \le2^k\prod_{\ell=1}^k\|h^\ell\|.
\]
Only one weak factor appears in each link, so the bias cost remains $1/\rho$, not a higher power. These two facts give the last line of \eqref{eq:hard-properties-main}.

\subsection{Complete replies, censoring, and adaptive information}
If $q_j(x_j)\le a$, the outgoing link has value and every derivative through order $p$ exactly zero. Therefore, before stage $j$ first opens, its public reply is a deterministic function of completed-stage data and the selected row of table $j$. Revealing that entire row only strengthens the algorithm.

For a stage with at most $r$ distinct rows exposed, the censored row transcript satisfies
\[
 I(\Theta;\mathcal T_r)\le Dr\rho^2.
\]
Under an independent hidden direction, any transcript-measurable vector $V$ with $\|V\|\le1$ has alignment exceeding $a$ with probability at most $e^{-D/128}$. Relative-entropy data processing then shows that a stage is unlikely to open in fewer than
\[
 r_\rho=\left\lfloor\frac1{4096}\min\{n,\rho^{-2}\}\right\rfloor
\]
assigned calls. Crucially, the censored transcript excludes the completion response: the response may reveal the direction, but it arrives only after the first alignment query has been chosen. A first-hit coupling makes this distinction rigorous.

A truncated public simulator reveals at most one new stage per query. Its untouched future stages remain independent of its history, so a union bound controls all unintended future alignments in block dimension $D=O(1+\log((N+1)T))$. On the no-future-alignment event, exact gate flatness couples its public replies to those of the true oracle. Stage costs add without assuming independent durations. Reserving one untouched final stage also handles an arbitrary unqueried output. The resulting lower bound is
\begin{equation}\label{eq:row-stage-main}
 N=\Omega\!\left(T\min\{n,\rho^{-2}\}\right).
\end{equation}
The bounded map $\Psi$ makes this argument valid for every query radius.

\subsection{The matching bias balance and remaining regimes}
Scale by $f_i(y)=\alpha R_i(\beta y)$ with
\[
 \beta^p=\frac{L_p\rho}{6H_{p+1}\eps},\qquad
 \alpha=\frac{6\eps}{\beta}.
\]
This enforces individual smoothness and a gradient barrier above $2\eps$. The gap budget permits
\[
 T=\Theta_p(K_p\rho^{1/p})
\]
whenever the right side is sufficiently large. Consequently \eqref{eq:row-stage-main} becomes
\[
 \Omega_p\!\left(K_p\min\{n\rho^{1/p},\rho^{-2+1/p}\}\right).
\]
The increasing and decreasing branches balance at $\rho\asymp n^{-1/2}$, yielding $\Omega_p(K_pn^{1-1/(2p)})$.

For the complementary regime, a single hidden component contains an unknown linear perturbation of a public quadratic. Until that component is queried, four disjoint possible stationary regions are indistinguishable, giving $\Omega(n)$. Small component counts use identical copies of a single hidden radial chain. Appendix~\ref{sec:scaling} supplies the integer choices, probability constants, dimension bound, and boundary-parameter cases.

\section{Consequences and Limitations}\label{sec:discussion}
The fixed-order finite-sum price is $n^{1-1/(2p)}$: the exponents $3/4,5/6,7/8,\ldots$ are intrinsic to exact incremental access under individual higher-order smoothness. The proof identifies the same exponent from two different balances. The algorithm amortizes full passes over $\sqrt n$-step verified epochs; the hard instance requires order $n$ rows per weakly hidden stage while allowing order $K_pn^{-1/(2p)}$ stages. Moment-class inclusion makes this law unchanged under any normalized moment assumption of order at least two.

This theorem does not compare the computational price of forming tensors of different orders. It fixes the order of the oracle and the regularity class together; it therefore does not assert that richer information is intrinsically harmful on one fixed class. Nor does it provide a sharp second-order-stationarity complexity, a tensor-vector-product lower bound, a necessary second-moment threshold, or optimal dependence on a growing $p$. The high-confidence upper bound need not be minimax in $\delta$. Exact function values and unrestricted model computation are substantive features of the result, not omitted implementation costs.

\bibliographystyle{plainnat}
\bibliography{references}
\clearpage
\appendix
\section{Proof of the Upper Bound}\label{sec:upper}
The proof first controls Taylor residuals over one adaptive epoch, then turns that event into verified progress and a total query cap. Only condition \eqref{eq:ms} is used. Let $L=L_p$. By Jensen's inequality, $\nabla^p F$ is $L$-Lipschitz.

At a snapshot $z$, query every component and store its entire $p$-jet. Define the scalar Taylor polynomials
\begin{equation}\label{eq:poly}
 P_i(y)=\sum_{j=0}^p\frac1{j!}\nabla^j f_i(z)[(y-z)^{\otimes j}],
 \qquad P=\frac1n\sum_iP_i.
\end{equation}
Polynomial evaluations and tensor contractions require no new oracle calls.

\begin{lemma}[Taylor errors and residual second moments]\label[lemma]{lem:taylor}
For $2\le j\le p$,
\begin{equation}\label{eq:taylor-j}
 \opnorm{\nabla^j F(x)-\nabla^jP(x)}
 \le\frac{L\norm{x-z}^{p+1-j}}{(p+1-j)!}.
\end{equation}
For $\norm{x-z}\le(m-1)r$ and $\norm s\le r$, set
\begin{align}
 Y_i(x,s;z)&=\nabla f_i(x+s)-\nabla f_i(x)
       -\nabla P_i(x+s)+\nabla P_i(x),\label{eq:residual}\\
 \sigma&=\frac{Lm^{p-1}r^p}{(p-1)!}.
\end{align}
Then
\begin{equation}\label{eq:residual-second}
 \E_i\norm{Y_i(x,s;z)}^2\le\sigma^2,
\end{equation}
where $i$ is uniform on $[n]$.
\end{lemma}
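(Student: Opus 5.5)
Both parts reduce to the integral form of Taylor's theorem applied to $\nabla^{p}$ differences, plus the mean-squared condition \eqref{eq:ms}.

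\emph{Step 1: the bound \eqref{eq:taylor-j}.} Since $\nabla^jP$ is exactly the degree-$(p-j)$ Taylor polynomial of $\nabla^jF$ at $z$, the remainder is
\[
\nabla^jF(x)-\nabla^jP(x)=\int_0^1\frac{(1-\tau)^{p-j-1}}{(p-j-1)!}\bigl(\nabla^pF(z+\tau(x-z))-\nabla^pF(z)\bigr)[(x-z)^{\otimes(p-j)}]\,d\tau .
\]
This form holds for $j<p$; for $j=p$ the remainder is just the $\nabla^p$ difference itself. Because $\nabla^pF$ is $L$-Lipschitz by Jensen, the integrand's operator norm is at most $L\tau\|x-z\|^{p+1-j}$. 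Integrating with $\int_0^1\tau(1-\tau)^{k-1}/(k-1)!\,d\tau=1/(k+1)!$, where $k=p-j$, gives $L\|x-z\|^{p+1-j}/(p+1-j)!$.

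\emph{Step 2: a pointwise representation of the residual.} Set $g_i=f_i-P_i$. Then $\nabla^pg_i(y)=\nabla^pf_i(y)-\nabla^pf_i(z)$, and $g_i$ has vanishing derivatives of orders $1,\dots,p$ at $z$. Write
\[
Y_i=\nabla g_i(x+s)-\nabla g_i(x)=\int_0^1\nabla^2g_i(x+\tau s)[s]\,d\tau .
\]
Expand $\nabla^2g_i(w)$ around $z$ by Taylor's formula with integral remainder of order $p-2$. The result is
\[
\nabla^2g_i(w)=\int_0^1\frac{(1-u)^{p-3}}{(p-3)!}\,\Delta_i(z+u(w-z))[(w-z)^{\otimes(p-2)}]\,du ,
\qquad \Delta_i(y)=\nabla^pf_i(y)-\nabla^pf_i(z).
\]
When $p=2$ this is simply $\Delta_i(w)$. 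Hence $\|Y_i\|$ is bounded by a weighted average, over a probability-type measure in $(\tau,u)$ of total mass $1/(p-2)!$, of the quantities
\[
\|\Delta_i(y_{\tau,u})\|_{\rm op}\,\|w_\tau-z\|^{p-2}\|s\| ,
\]
where $w_\tau=x+\tau s$. Here $\|w_\tau-z\|\le mr$.

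\emph{Step 3: second moments, the main step.} This is the step that needs care. We must average over $i$ \emph{before} using any Lipschitz bound, because only \eqref{eq:ms} is available. Apply Minkowski's inequality in $L^2(i)$ to the integral representation, which moves the $L^2$ norm inside the $(\tau,u)$-integral. Then \eqref{eq:ms} gives
\[
\bigl(\E_i\|\Delta_i(y)\|_{\rm op}^2\bigr)^{1/2}\le L\|y-z\|\le L\,u\,mr .
\]
Combining with Step 2 yields
\[
(\E_i\|Y_i\|^2)^{1/2}\le L\,(mr)^{p-1}r\int_0^1\frac{u(1-u)^{p-3}}{(p-3)!}\,du = \frac{L(mr)^{p-1}r}{(p-1)!},
\]
which equals $\sigma$. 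For $p=2$ the analogous direct computation gives $Lmr^2\le\sigma$.

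\emph{Refinement if the constant is off.} The factor $u$ could instead be bounded by $1$. This produces at most an extra factor of order $p$, which is harmless up to constants. If the exact constant $\sigma$ must be matched, I would keep the factor $u$ as above. I would also use $\|y-z\|\le\|x-z\|+\tau\|s\|\le mr$, where $\|x-z\|\le(m-1)r$; bounding both $\|w_\tau-z\|$ and $\|y-z\|$ by $mr$ is exactly what produces the factor $m^{p-1}$ in $\sigma$.
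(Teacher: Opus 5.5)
Your proposal is correct and is essentially the paper's proof. Both use the same integral-remainder identity for $\nabla^2(f_i-P_i)$ around $z$, and both apply Minkowski's inequality in $L^2(i)$ before invoking the mean-squared condition, so the weight $u(1-u)^{p-3}/(p-3)!$ integrates to $1/(p-1)!$ and yields exactly $\sigma$. The paper applies Minkowski twice (first a pointwise second-moment bound on the Hessian remainder, then along the segment $x+\tau s$), whereas you apply it once to the combined $(\tau,u)$ double integral.
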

\begin{proof}
Taylor's formula applied to $\nabla^jF$ gives \eqref{eq:taylor-j}. Applying the same integral remainder formula componentwise and using Minkowski's inequality in the finite normalized $\ell_2$ space gives
\[
 \left(\E_i\opnorm{\nabla^2f_i(y)-\nabla^2P_i(y)}^2\right)^{1/2}
 \le\frac{L\norm{y-z}^{p-1}}{(p-1)!}.
\]
For $p=2$ this is exactly \eqref{eq:ms}. For $p>2$, put $h=y-z$ and use the identity
\[
 \nabla^2 f_i(y)-\nabla^2P_i(y)
 =\frac1{(p-3)!}\int_0^1(1-t)^{p-3}
 [\nabla^p f_i(z+th)-\nabla^p f_i(z)][h^{\otimes(p-2)}]dt.
\]
The normalized $L_2$ norm of the integrand is at most $Lt\|h\|^{p-1}$. The integral of $t(1-t)^{p-3}/(p-3)!$ is $1/(p-1)!$. Tensor contraction leaves a bilinear form in this display. Now
\[
 Y_i(x,s;z)=\int_0^1
 [\nabla^2f_i(x+ts)-\nabla^2P_i(x+ts)]s\,dt.
\]
Every point in this integral is within $mr$ of $z$. A second application of Minkowski proves \eqref{eq:residual-second}.
\end{proof}

An epoch starts at $x_0=z$, $v_0=\nabla F(z)$, and has at most $m$ steps. After an adaptive step $s_t$ has been chosen, use fresh independent uniform indices $I_{t,1},\ldots,I_{t,B}$ and update
\begin{equation}\label{eq:grad-update}
 v_{t+1}=v_t+\nabla P(x_t+s_t)-\nabla P(x_t)
  +\frac1B\sum_{b=1}^B Y_{I_{t,b}}(x_t,s_t;z).
\end{equation}
Only two component queries per sampled residual are needed, since all derivatives at $z$ are stored. Algebraically, this is the Taylor control-variate recursion of \citet[Algorithm~3]{zhougu2022}: for any degree-$p$ polynomial $P_i$,
\begin{equation}\label{eq:control-variate-identity}
 \nabla P_i(x+s)-\nabla P_i(x)
 =\sum_{k=1}^{p-1}\frac{1}{k!}
   \sum_{u=0}^{p-k-1}\frac{1}{u!}
   \nabla^{k+u+1}f_i(z)[(x-z)^{\otimes u},s^{\otimes k},\,\cdot\,].
\end{equation}
The contribution here is the mean-squared analysis and verified-epoch wrapper, not the underlying Taylor control variate.

\begin{lemma}[One-epoch success with a constant batch multiplier]\label[lemma]{lem:epoch-good}
Suppose every step has norm at most $r$ and $B\ge64m$. Then, conditional on the complete history before the epoch,
\begin{equation}\label{eq:epoch-good}
 \Prob\left(\max_{0\le t\le m}\norm{v_t-\nabla F(x_t)}>\sigma\right)
 \le\frac1{64}.
\end{equation}
The statement remains valid for an epoch that stops early.
\end{lemma}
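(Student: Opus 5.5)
The plan is to write the error $e_t=v_t-\nabla F(x_t)$ as a vector martingale with respect to the filtration generated by the pre-epoch history and the indices drawn so far, and then apply Doob's $L^2$ maximal inequality (or Kolmogorov's inequality) to $\|e_t\|$. First I will verify the one-step identity. Since $\nabla P$ is the average of the $\nabla P_i$, the recursion \eqref{eq:grad-update} gives
\[
 e_{t+1}-e_t=\frac1B\sum_{b=1}^B Y_{I_{t,b}}(x_t,s_t;z)-\bigl[\nabla F(x_t+s_t)-\nabla F(x_t)-\nabla P(x_t+s_t)+\nabla P(x_t)\bigr]
 =\frac1B\sum_{b=1}^B\bigl(Y_{I_{t,b}}-\E_iY_i\bigr).
\]
Here $\E_iY_i(x,s;z)$ is exactly the bracketed average residual. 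The step $s_t$ and the point $x_t$ are measurable with respect to $\mathcal F_t$, the history before the fresh indices $I_{t,\cdot}$ are drawn, and those indices are independent uniform. Hence the increment has conditional mean zero. Also $e_0=0$ because $v_0=\nabla F(z)$ is computed exactly.

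Next I will bound the conditional second moment of each increment. Independence of the $B$ indices given $\mathcal F_t$ gives $\E[\|e_{t+1}-e_t\|^2\mid\mathcal F_t]\le B^{-1}\E_i\|Y_i\|^2$, since the variance is at most the second moment. Every step has norm at most $r$, so $\|x_t-z\|\le tr\le(m-1)r$ for $t\le m-1$. Therefore \Cref{lem:taylor} applies and gives the bound $\sigma^2/B$. Orthogonality of martingale increments then yields $\E[\|e_t\|^2\mid\mathcal F_0]\le m\sigma^2/B$ for all $t\le m$. Only $m-1$ updates occur in \Cref{alg:verified}, but $m$ is a valid upper bound.

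To handle early stopping and the final step with no update, I will define a stopped process. For $t$ past the break time (a stopping time, since the break decision depends only on $\mathcal F_t$), or past the last update, the increments are set to zero. The stopped process is still a martingale with the same increment bound, and on the event where the epoch runs, its values agree with the actual errors at all times where $v_t$ and $x_t$ are both defined. Concretely, after the break, $x_t$ is not updated, so the maximum over realized indices equals the maximum of the stopped process.

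Finally, $\|e_t\|$ is a nonnegative submartingale because the norm is convex. Doob's inequality therefore gives
\[
 \Prob\Bigl(\max_{t\le m}\|e_t\|>\sigma\Bigm|\mathcal F_0\Bigr)\le\frac{\E[\|e_m\|^2\mid\mathcal F_0]}{\sigma^2}\le\frac mB\le\frac1{64}.
\]
The main obstacle is bookkeeping rather than analysis. One must check that the adaptive choice of $s_t$ (a measurable global minimizer depending on $v_t$) keeps the increments conditionally centered. One must also confirm that the radius condition $\|x_t-z\|\le(m-1)r$ needed by \Cref{lem:taylor} holds at every sampled update, including under stopping; this is why I use the stopped martingale.
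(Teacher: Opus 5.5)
Your proposal is correct and follows essentially the same route as the paper. The error $e_t=v_t-\nabla F(x_t)$ is a vector martingale started at zero whose increments, conditionally on the history and the adaptively chosen step, are centered with second moment at most $\sigma^2/B$ by \Cref{lem:taylor}; early stops and the unperformed final update are padded with zero increments; and the maximal inequality gives $\E\norm{e_m}^2/\sigma^2\le m/B\le 1/64$. One small wording point: your displayed bound is Doob's inequality applied to the nonnegative submartingale $\norm{e_t}^2$ (as the paper states it), or equivalently the weak $L^2$ form for $\norm{e_t}$, rather than the first-moment version for $\norm{e_t}$.
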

\begin{proof}
Put $e_t=v_t-\nabla F(x_t)$. Conditional on the history and the chosen step, the increment $e_{t+1}-e_t$ has mean zero and conditional squared norm in expectation at most $\sigma^2/B$. Thus $e_t$ is a vector martingale and
\[
 \E\norm{e_m}^2\le\frac{m\sigma^2}{B}.
\]
Pad an early-stopped epoch with zero increments. In \Cref{alg:verified}, only $e_0,\ldots,e_{m-1}$ are needed to choose steps; the unperformed final estimator update is also padded with a zero martingale increment. The nonnegative submartingale $\norm{e_t}^2$ satisfies the maximal inequality
\[
 \Prob\left(\max_t\norm{e_t}>\sigma\right)
 \le\frac{\E\norm{e_m}^2}{\sigma^2}\le\frac1{64}.
\]
All statements hold conditionally on an arbitrary starting history. No coordinatewise tensor concentration, dimension factor, bounded-gradient assumption, or tail envelope is used.
\end{proof}

Set
\begin{equation}\label{eq:upperparams}
 m=\lceil\sqrt n\rceil,\qquad B=64m,\qquad
 A=Lm^{p-1},\qquad M=4(p^2+2^{p+1})A,
 \qquad r^p=\frac{p!\eps}{4M}.
\end{equation}
At point $x_t$, choose a global minimizer over $\norm s\le r$ of
\begin{equation}\label{eq:localmodel}
 q_t(s)=\ip{v_t}{s}
 +\sum_{j=2}^p\frac1{j!}\nabla^jP(x_t)[s^{\otimes j}]
 +\frac{M}{(p+1)!}\norm s^{p+1}.
\end{equation}
A fixed Borel minimizer selector exists; Appendix~\ref{sec:selector} gives a constructive measurability argument. This is a permissible internal computation in the stated oracle model, not a polynomial-time subroutine guarantee.

\begin{lemma}[Deterministic model alternatives]\label[lemma]{lem:model-alt}
On the good event in \eqref{eq:epoch-good}, every model minimizer satisfies
\begin{align}
 \norm{s_t}=r&\quad\Longrightarrow\quad
 F(x_t+s_t)\le F(x_t)-d_0,
 &d_0&=\frac{M r^{p+1}}{2(p+1)!},\label{eq:decrease}\\
 \norm{s_t}<r&\quad\Longrightarrow\quad
 \norm{\nabla F(x_t+s_t)}\le\eps/2.\label{eq:stationary}
\end{align}
\end{lemma}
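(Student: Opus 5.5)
The plan is to compare the model $q_t$ in \eqref{eq:localmodel} with the exact degree-$p$ Taylor expansion of $F$ at $x_t$. The discrepancy has three sources, each bounded deterministically on the good event of \Cref{lem:epoch-good}:
\begin{itemize}
\item the gradient-estimator error, $\norm{v_t-\nabla F(x_t)}\le\sigma$;
\item the tensor errors $\nabla^jF(x_t)-\nabla^jP(x_t)$ for $2\le j\le p$, controlled by \eqref{eq:taylor-j};
\item the $(p+1)$st-order Taylor remainder of $F$, controlled because $\nabla^pF$ is $L$-Lipschitz (Jensen).
\end{itemize}
Every step has norm at most $r$ and at most $m-1$ steps precede $s_t$. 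Hence $\norm{x_t-z}\le(m-1)r$ and every point $x_t+\tau s_t$ lies within $mr$ of $z$. So each tensor error is at most $L(mr)^{p+1-j}/(p+1-j)!$. Since $p+1-j\le p-1$, every error term is a $p$-dependent multiple of $A r^{p+1}$ or $A r^{p}$, where $A=Lm^{p-1}$. The constant $M=4(p^2+2^{p+1})A$ is designed to dominate these multiples.

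\emph{Boundary case \eqref{eq:decrease}.} I would write the standard upper Taylor bound
\[
F(x_t+s)\le F(x_t)+\ip{\nabla F(x_t)}{s}+\sum_{j=2}^p\frac1{j!}\nabla^jF(x_t)[s^{\otimes j}]+\frac{L\norm s^{p+1}}{(p+1)!}.
\]
Replacing true derivatives by model ones gives
\[
F(x_t+s)\le F(x_t)+q_t(s)-\frac{(M-L)\norm s^{p+1}}{(p+1)!}+\sigma\norm s+\sum_{j=2}^p\frac{L(mr)^{p+1-j}\norm s^j}{j!\,(p+1-j)!}.
\]
Global minimality gives $q_t(s_t)\le q_t(0)=0$, and on the boundary $\norm{s_t}=r$. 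So it suffices to check that $L$, $\sigma r=Ar^{p+1}/(p-1)!$, and the sum are together at most $Mr^{p+1}/(2(p+1)!)$. The sum is at most $A r^{p+1}\sum_j 1/(j!(p+1-j)!)\le A r^{p+1}2^{p+1}/(p+1)!$. The gradient-error term $Ar^{p+1}/(p-1)!=p(p+1)Ar^{p+1}/(p+1)!$ is the source of the $p^2$ in $M$. Since $M/2\ge 2(p^2+2^{p+1})A$, this is routine.

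\emph{Interior case \eqref{eq:stationary}.} If $\norm{s_t}<r$, the minimizer is an unconstrained local minimizer, so $\nabla q_t(s_t)=0$:
\[
v_t+\sum_{j=2}^p\frac1{(j-1)!}\nabla^jP(x_t)[s_t^{\otimes(j-1)},\cdot\,]+\frac{M}{p!}\norm{s_t}^{p-1}s_t=0.
\]
Subtracting this from the Taylor expansion of $\nabla F(x_t+s_t)$ leaves four terms:
\begin{itemize}
\item the gradient error, at most $\sigma=Ar^p/(p-1)!$;
\item the tensor errors, at most $Ar^p\sum_j 1/((j-1)!(p+1-j)!)\le Ar^p 2^p/p!$;
\item the gradient Taylor remainder, at most $Lr^p/p!$;
\item the regularizer term, at most $Mr^p/p!=\eps/4$ by the choice $r^p=p!\eps/(4M)$.
\end{itemize}
The first three together are at most $(p+1+2^p)Ar^p/p!\le Mr^p/(4\,p!)=\eps/16$. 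Hence $\norm{\nabla F(x_t+s_t)}\le\eps/4+\eps/16\le\eps/2$.

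The main obstacle is bookkeeping rather than ideas. Three points need care. First, the tensor-error bound must be applied at $x_t$ with the radius $(m-1)r$, so that the powers of $m$ never exceed $m^{p-1}$. Second, the good event must cover the step index $t$ in question. Third, the explicit constant $4(p^2+2^{p+1})$ must absorb all the $p$-dependent sums simultaneously, with the factor $1/2$ in $d_0$ in the value estimate and the target $\eps/2$ in the gradient estimate. I would first write both inequalities with symbolic constants, then verify them against $M$ at the end.
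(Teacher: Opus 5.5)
Your proposal is correct and follows the paper's proof essentially step for step. It uses the same three error sources: the estimator error $\sigma=Ar^p/(p-1)!$ on the good event, the tensor errors from \eqref{eq:taylor-j} with $m^{p+1-j}\le m^{p-1}$, and the Lipschitz remainder of $\nabla^pF$. It then uses $q_t(s_t)\le q_t(0)=0$ on the boundary and $\nabla q_t(s_t)=0$ in the interior, and differs only in constant bookkeeping: the paper uses the exact identity $\frac1{(p-1)!}+\sum_{j=2}^p\frac1{j!(p+1-j)!}=\frac{p^2+2^{p+1}-3}{(p+1)!}$ where you use the cruder $2^{p+1}$. One small slip to fix is that, in the boundary check, the term you list as ``$L$'' should read $Lr^{p+1}/(p+1)!\le Ar^{p+1}/(p+1)!$.
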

\begin{proof}
Let $E_1=\norm{v_t-\nabla F(x_t)}$ and
$E_j=\opnorm{\nabla^jP(x_t)-\nabla^jF(x_t)}$ for $j\ge2$.
Throughout the epoch,
\[
 E_1\le\frac{Ar^p}{(p-1)!},\qquad
 E_j\le\frac{Ar^{p+1-j}}{(p+1-j)!}\quad(2\le j\le p).
\]
Here $m^{p+1-j}\le m^{p-1}$ for $j\ge2$. Since $q_t(s_t)\le q_t(0)=0$, on the boundary Taylor's formula gives
\begin{align*}
 F(x_t+s_t)-F(x_t)
 &\le-\frac{M-L}{(p+1)!}r^{p+1}
       +\sum_{j=1}^p\frac{E_jr^j}{j!}\\
 &\le-\frac{M-A(p^2+2^{p+1}-2)}{(p+1)!}r^{p+1}
 \le-\frac{M}{2(p+1)!}r^{p+1}.
\end{align*}
We used the exact identity
\[
 \frac1{(p-1)!}+\sum_{j=2}^p\frac1{j!(p+1-j)!}
 =\frac{p^2+2^{p+1}-3}{(p+1)!}.
\]
In the interior, $\nabla q_t(s_t)=0$. Comparing this equation with the Taylor expansion of $\nabla F(x_t+s_t)$ yields
\begin{align*}
 \norm{\nabla F(x_t+s_t)}
 &\le\frac{Mr^p}{p!}+\frac{Lr^p}{p!}
      +\sum_{j=1}^p\frac{E_jr^{j-1}}{(j-1)!}\\
 &\le\frac{[M+A(p+2^p-1)]r^p}{p!}
 \le\frac{2Mr^p}{p!}=\frac\eps2.
\end{align*}
\end{proof}

\subsection{Exact verification, rejection, and a deterministic total budget}
A \emph{trial epoch} operates as follows. Query all components at its starting snapshot $z$. Run the preceding procedure until either an interior model step occurs or $m$ boundary steps have been taken. Call the resulting candidate $y$. Query all components at $y$ to obtain the exact values $F(y)$ and $\nabla F(y)$.

If $\norm{\nabla F(y)}\le\eps$, return $y$. Otherwise accept $y$ as the next snapshot only if the trial took $m$ boundary steps and
\begin{equation}\label{eq:verification}
 F(y)\le F(z)-m d_0.
\end{equation}
In every other case reject the trial and keep the old snapshot $z$. Rejected trials use fresh samples on the next attempt. Reusing stored jets can save queries, but is unnecessary for the bound.

Every output from the stationarity test is certified. Every accepted nonterminal trial decreases $F$ by at least $m d_0$, even when its gradient estimates were inaccurate. Every good trial either returns a certified point or is accepted, by \Cref{lem:model-alt}.

Let
\begin{equation}\label{eq:trial-budget}
 H=\left\lfloor\frac{\Delta}{m d_0}\right\rfloor+1,
 \qquad R=2H.
\end{equation}
Stop after $R$ trials if no point has been certified and return an arbitrary point in that exceptional case.

\begin{proposition}[No logarithmic loss at constant success]\label[proposition]{prop:upper}
The verified-epoch procedure succeeds with probability at least $31/32$ and uses at most $C_pS_p$ component queries on every execution path.
\end{proposition}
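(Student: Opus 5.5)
The proof has two independent parts: a deterministic query count on every path, and a probability bound for the event that the trial cap is hit without certification.

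\emph{Query count.} Each trial performs one full pass at the snapshot ($n$ queries), at most $m-1$ residual updates with $2B$ queries each, and one full pass at the endpoint ($n$ queries). With $B=64m$ and $m=\lceil\sqrt n\rceil\le 2\sqrt n$, one trial costs at most $2n+128m^2\le 2n+512n=O(n)$. There are at most $R=2H\le 2+2\Delta/(md_0)$ trials, so it remains to express $\Delta/(md_0)$ in terms of $K_p$. From \eqref{eq:upperparams}, $M\asymp_p Lm^{p-1}$ and $r\asymp_p (\eps/M)^{1/p}$. Hence
\[
 d_0\asymp_p Mr^{p+1}\asymp_p \eps^{(p+1)/p}M^{-1/p}\asymp_p \eps^{(p+1)/p}L^{-1/p}m^{-(p-1)/p}.
\]
This gives
\[
 md_0\asymp_p \eps^{(p+1)/p}L^{-1/p}m^{1/p},
 \qquad
 \frac{\Delta}{md_0}\asymp_p K_p m^{-1/p}.
\]
The total is therefore $O_p\bigl(n(1+K_pn^{-1/(2p)})\bigr)=O_p(S_p)$, deterministically on every path.

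\emph{Success probability.} Call trial $k$ \emph{good} if the event of \Cref{lem:epoch-good} holds for it. This event is defined after the snapshot is fixed, so its conditional probability given the history $\mathcal F_{k-1}$ is at least $63/64$. By \Cref{lem:model-alt}, a good trial has one of two outcomes:
\begin{itemize}
 \item It hits an interior step. The endpoint then has $\|\nabla F\|\le\eps/2$ and is certified by the exact test.
 \item It takes $m$ boundary steps. Each step decreases $F$ by at least $d_0$, so $F(y)\le F(z)-md_0$ and \eqref{eq:verification} accepts $y$, unless $y$ is returned first.
\end{itemize}
Every accepted trial, good or not, lowers $F$ by at least $md_0$ because acceptance is checked with exact values. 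Since $F(0)-\inf F\le\Delta$, the number of accepted trials is at most $\lfloor\Delta/(md_0)\rfloor<H$. So if all $R=2H$ trials run without certification, at most $H-1$ were accepted. Every good trial is either certified or accepted, so at most $H-1$ trials were good and more than $H$ were bad.

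\emph{Probability bound.} Let $X_k$ be the indicator that trial $k$ is bad, with $X_k:=0$ once the algorithm has stopped. Then $\E[X_k\mid\mathcal F_{k-1}]\le 1/64$, so $\E\sum_k X_k\le 2H/64$. Markov's inequality gives $\Prob(\sum_k X_k\ge H)\le 1/32$. No independence between trials is needed, only the conditional bound of \Cref{lem:epoch-good}, which holds for an arbitrary starting history.

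The delicate part is the conditioning: the good event must be measurable with respect to the trial's own fresh samples and must have the stated conditional bound under any adaptively chosen snapshot. This is exactly how \Cref{lem:epoch-good} is stated, including early stopping. A smaller subtlety is the case $\Delta<md_0$, where $H=1$ and $R=2$: the bound on accepted trials must still use the strict inequality $\lfloor\Delta/(md_0)\rfloor<H$, and the argument above does.
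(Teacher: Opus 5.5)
Your proposal is correct and follows the paper's proof essentially step for step: the per-trial cost bound $2n+2Bm\le 514n$ is the same, as is the scaling $\Delta/(md_0)\asymp_p K_pm^{-1/p}$, for which the paper records the explicit constant $C'_p$. The probability argument also matches the paper's: nontermination after $2H$ trials forces at least $H$ bad trials (your use of $\lfloor\Delta/(md_0)\rfloor<H$ is the paper's $Hmd_0>\Delta$), and Markov's inequality is then applied to the padded bad-trial indicators without any independence assumption.
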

\begin{proof}
Let a bad-trial indicator be one when the good event from \Cref{lem:epoch-good} fails. After termination pad remaining trials by zero bad indicators. Conditional bad probabilities are at most $1/64$, so the expected number of bad trials among the first $R$ is at most $R/64$.

If the algorithm has not terminated, it cannot have encountered $H$ good trials: each would have generated an accepted decrease of at least $m d_0$, contradicting
$H m d_0>\Delta\ge F(0)-\inf F$. Thus nontermination after $R=2H$ implies at least $R/2$ bad trials. Markov's inequality bounds its probability by $1/32$. This does not require independence between trials.

Each trial uses at most $2n+2Bm$ queries, including both its snapshot and endpoint verification. Since $m^2\le4n$ and $B=64m$, this is at most $514n$. Moreover, the definitions of $r,M,d_0$ imply
\begin{equation}\label{eq:epoch-scaling}
 \frac{\Delta}{m d_0}
 =C'_p\K m^{-1/p},\qquad
 C'_p=\frac{2(p+1)!4^{(p+1)/p}[4(p^2+2^{p+1})]^{1/p}}
 {(p!)^{(p+1)/p}}.
\end{equation}
Therefore the deterministic query budget is
\[
 O\!\left(n\left[1+C'_p\K m^{-1/p}\right]\right)
 =O_p\!\left(n+\K n^{1-1/(2p)}\right).
\]
The failure event can only arise from the deterministic cap, not from accepting an uncertified point as stationary.
\end{proof}

\begin{proposition}[Arbitrary confidence]\label[proposition]{prop:confidence}
For every $\delta\in(0,1)$, replace the trial cap by
\[
 R_\delta=2H+\left\lceil4\log(1/\delta)\right\rceil.
\]
The algorithm returns an $\eps$-stationary point with probability at least $1-\delta$, within
\[
 O_p\!\left(\K n^{1-1/(2p)}+n[1+\log(1/\delta)]\right)
\]
queries on every path.
\end{proposition}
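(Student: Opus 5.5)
The plan is to rerun the proof of \Cref{prop:upper} with the cap $R_\delta$ in place of $2H$, and to replace Markov's inequality by an exponential supermartingale (Freedman/Azuma-type) bound on the number of bad trials. The deterministic structure is unchanged. Every returned point from the stationarity test is exactly certified. Every accepted trial decreases $F$ by at least $md_0$. By \Cref{lem:model-alt}, every good trial either returns or is accepted. Hence, as before, nontermination after $R_\delta$ trials forces fewer than $H$ good trials, i.e. at least $R_\delta-H+1$ bad trials.

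For the probability bound, let $X_k$ be the bad-trial indicators, padded by zeros after termination. By \Cref{lem:epoch-good}, $\E[X_k\mid\mathcal F_{k-1}]\le 1/64$ conditionally on the entire history, whatever that history is. For any $\lambda>0$, the conditional moment generating function is then $\E[e^{\lambda X_k}\mid\mathcal F_{k-1}]\le 1+(e^\lambda-1)/64$. Iterating the tower property gives $\E e^{\lambda\sum_{k\le R}X_k}\le(1+(e^\lambda-1)/64)^{R}$ with no independence assumption. Take $\lambda=\log 2$ (or $\lambda=1$) and apply Chernoff with $R=R_\delta$. This yields
\[
\Prob\Bigl(\sum_k X_k\ge R_\delta-H\Bigr)\le e^{-\lambda(R_\delta-H)}(1+1/64)^{R_\delta}\le \exp\bigl(-(R_\delta-H)\lambda+R_\delta/64\bigr).
\]
Since $R_\delta-H\ge R_\delta/2$, the exponent is at most $-R_\delta(\lambda/2-1/64)$. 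With $\lambda=\log2$ this is at least $0.33R_\delta$. We need it to be at least $\log(1/\delta)$, which holds because $R_\delta\ge4\log(1/\delta)$ gives $0.33\cdot4>1$. If the constant turns out tight, I will take $\lambda$ larger, e.g. $\lambda=2$, which gives $(1-1/64\cdot e^2\cdot 2/2)$-type slack; any fixed $\lambda$ with $\lambda/2-(e^\lambda-1)/64\ge1/4$ works. This margin check is the only real obstacle and is routine bookkeeping: verify a concrete $\lambda$ satisfying the inequality, using $\log(1+u)\le u$.

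For the query count, each trial still costs at most $2n+2Bm\le514n$ queries on every path, as in the proof of \Cref{prop:upper}. So the total is at most
\[
514n\bigl(2H+\lceil4\log(1/\delta)\rceil\bigr).
\]
By \eqref{eq:epoch-scaling}, $H\le1+C'_p\K m^{-1/p}$, so $nH=O_p(n+\K n^{1-1/(2p)})$ using $m\ge\sqrt n$. The remaining term is $O(n[1+\log(1/\delta)])$, which gives the claimed bound.
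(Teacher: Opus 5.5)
Your proof is correct and follows the paper's argument: the paper also uses the conditional $1/64$ bad-trial bound to get $\E 2^{Z_r}\le(65/64)^r$ (your choice $\lambda=\log 2$). It then uses that nontermination after $r\ge 2H$ trials forces at least $r/2$ bad trials, giving $(65/(64\sqrt2))^r\le e^{-r/4}$, and charges the same $514n$ queries per trial. Your hedge about switching to $\lambda=2$ is unnecessary, since $\lambda=\log 2$ already yields an exponent of about $0.33R_\delta\ge 1.32\log(1/\delta)$.
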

\begin{proof}
Let $Z_r$ denote the number of bad trials among the first $r$, using zero indicators after termination. The conditional bad probability bound gives
$\E[2^{Z_r}\mid\text{history before trial }r]\le (65/64)2^{Z_{r-1}}$.
Induction yields $\E 2^{Z_r}\le(65/64)^r$.
For $r\ge2H$, nontermination implies $Z_r\ge r/2$. Therefore
\[
 \Prob(\text{nontermination after }r)
 \le \left(\frac{65}{64\sqrt2}\right)^r
 \le e^{-r/4}.
\]
The prescribed cap is at least $4\log(1/\delta)$. The query count follows from the same per-trial cost as in \Cref{prop:upper}.
\end{proof}
\begin{remark}
This is an upper bound in $\delta$, not a claim of confidence-optimal minimax complexity. Exact component values are used in \eqref{eq:verification}; all corresponding calls are charged. The fixed-confidence theorem does not invoke a union bound over all iterates.
\end{remark}

\section{Explicit Gates and Dimension-Free Derivative Bounds}\label{sec:regularity}
We now construct individually smooth hard instances. Use thresholds
\[
 a=\frac18,\qquad b=\frac14,\qquad w=b-a=\frac18.
\]
For the fixed target order $p$, let
\begin{equation}\label{eq:gate-poly}
 Q_p(t)=Z_p\int_0^t u^{p+1}(1-u)^{p+1}\,du,
 \qquad Z_p=\frac{(2p+3)!}{((p+1)!)^2},
\end{equation}
and set
\begin{equation}\label{eq:gate}
 G_p(s)=\begin{cases}
 0,&s\le a,\\
 Q_p((s-a)/w),&a<s<b,\\
 1,&s\ge b.
 \end{cases}
\end{equation}
This is a nondecreasing $C^{p+1,1}$ function. It is flat through order $p+1$ at both ends: the function and its derivatives through order $p+1$ vanish on $(-\infty,a]$, and $G_p=1$ with all positive-order derivatives through $p+1$ zero on $[b,\infty)$. One extra order of flatness avoids any interface differentiability issue when bounding $D^{p+1}$.

For example, when $p=3$,
\[
 Q_3(t)=126t^5-420t^6+540t^7-315t^8+70t^9.
\]
For $1\le k\le p+1$, an explicit derivative envelope is
\begin{equation}\label{eq:gate-constants}
 \norm{G_p^{(k)}}_\infty\le E_{p,k}
 :=8^k Z_p\sum_{\ell=0}^{p+1}\binom{p+1}{\ell}
 \frac{(p+1+\ell)!}{(p+2+\ell-k)!},
 \qquad E_{p,0}=1.
\end{equation}
This follows by expanding $(1-t)^{p+1}$ and differentiating $Q_p'$; all denominators are factorials of nonnegative integers.

For a unit vector $u\in\R^D$, write
\[
 \Psi(z)=\frac{z}{\sqrt{1+\norm z^2}},\qquad
 q_u(z)=\ip{u}{\Psi(z)}.
\]

\begin{lemma}[All fixed-order radial derivatives]\label[lemma]{lem:radial}
Let $B_0=1$ and $B_k=2\cdot4^k k^k$ for $k\ge1$. For every $D$, every unit $u$, and every $k\ge0$,
\begin{equation}\label{eq:radial}
 \sup_z\opnorm{D^kq_u(z)}\le B_k.
\end{equation}
The order-zero norm means absolute value.
\end{lemma}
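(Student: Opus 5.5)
Since $q_u(z)=\ip{u}{\Psi(z)}$ is a linear functional of $\Psi$ with $\norm u=1$, it suffices to bound the $k$-linear vector-valued derivative: $\opnorm{D^kq_u(z)}\le\sup_{\|h_\ell\|\le1}\norm{D^k\Psi(z)[h_1,\ldots,h_k]}$. The case $k=0$ is immediate from $\norm{\Psi(z)}<1$. For $k\ge1$ we factor $\Psi(z)=z\,\phi(\norm z^2)$ with $\phi(s)=(1+s)^{-1/2}$. The point of this factorization is that every derivative of $z\mapsto\norm z^2$ of order at least three vanishes. This keeps the combinatorics dimension-free, because each factor is bounded in operator norm independently of $D$.

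First we control the one-variable derivatives. We have $\phi^{(j)}(s)=(-1)^j\frac{(2j-1)!!}{2^j}(1+s)^{-1/2-j}$. Next we apply Faà di Bruno to $g(z)=\phi(\norm z^2)$. Let $\psi(z)=\norm z^2$, so that $D\psi(z)[h]=2\ip z h$, $D^2\psi[h,h']=2\ip h{h'}$, and all higher derivatives vanish. Then $D^kg(z)[h_1,\ldots,h_k]$ is a sum over partitions of $[k]$ into blocks of size $1$ or $2$. A partition with $a$ singletons and $b$ pairs ($a+2b=k$, $j=a+b$ blocks) contributes $\phi^{(j)}(\norm z^2)\prod 2\ip z{h_\ell}\prod 2\ip{h_\ell}{h_{\ell'}}$.

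The key weighting is $|\phi^{(j)}(\norm z^2)|\,\norm z^a\le \frac{(2j-1)!!}{2^j}(1+\norm z^2)^{-1/2-j+a/2}\le\frac{(2j-1)!!}{2^j}$, which holds because $a\le j$. So each term is bounded by $2^{j}\frac{(2j-1)!!}{2^j}\le (2j)!!\le 2^j j!$ times unit-norm factors. The number of such partitions is at most the number of all set partitions, and crudely we get $\opnorm{D^kg}\le k!\,2^k\cdot(\#\text{partitions})\le (2k)^k\cdot k^k$-type bounds. We will tighten this to roughly $\opnorm{D^kg}\le 2^k k^k$. Then, by the Leibniz rule for $\Psi=z\,g$, we have $D^k\Psi[h_1,\ldots,h_k]=z\,D^kg[h_1,\ldots,h_k]+\sum_{\ell}h_\ell\,D^{k-1}g[h_{-\ell}]$, since $z$ is linear.

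The first term carries an extra factor $\norm z$, which must be absorbed. We do this by the same weighting with $a+1\le j+1$: since $(1+\norm z^2)^{-1/2-j+(a+1)/2}\le1$ whenever $a\le j$, the extra $\norm z$ is absorbed into the $(1+\norm z^2)^{-1/2}$ slack of $\phi$ itself. That slack is available precisely because the exponent is $-1/2-j$ rather than $-j$. The second term is at most $k\,\opnorm{D^{k-1}g}$.

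The main obstacle is purely bookkeeping: the product of the partition count, the double-factorial weights, and the factor $k$ from Leibniz must fit under $B_k=2\cdot4^kk^k$. We would bound the count of partitions into singletons and pairs by the involution number $\le k^{k/2}\le k^k$. With weights at most $2^jj!\le 2^kk^k/\!\,$(slack), the crude estimate $\opnorm{D^kg}\le 4^k k^k$ is ample, and then $\opnorm{D^k\Psi}\le 4^kk^k+k\cdot4^{k-1}(k-1)^{k-1}\le 2\cdot4^kk^k$. If the constant comes out slightly off, we would instead prove the bound by induction on $k$. For that we would differentiate the identity $(1+\norm z^2)\Psi=z\sqrt{1+\norm z^2}$, or use the recursion $D\Psi=(1+\norm z^2)^{-1/2}(I-\Psi\Psi^\top)$, whose higher derivatives follow from the product rule with growth factor at most $4k$ per order.
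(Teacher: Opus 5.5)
Your route is genuinely different from the paper's. The paper extends $t\mapsto q_u(z+th)$ holomorphically to the disk $|t|\le\sqrt{1+\norm z^2}/4$, where its modulus is below $2$. Cauchy's estimate then gives $|D^kq_u(z)[h,\ldots,h]|\le 2\,k!\,4^k$ on the diagonal, and the polarization constant $k^k/k!$ turns this into $B_k=2\cdot4^kk^k$.

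You instead bound the full multilinear form directly. You apply Fa\`a di Bruno to $\phi(\norm z^2)$, using two facts: $\norm z^2$ has no derivatives beyond order two, and $(1+\norm z^2)^{-1/2-j}$ absorbs up to $j+1$ factors of $\norm z$. Your derivative formula for $\phi$, the weighting inequality, the term-by-term absorption of the extra $\norm z$, and the Leibniz decomposition of $D^k(z\,g)$ are all correct. This avoids complex analysis and polarization. Done carefully, it even yields the sharper bound $\opnorm{D^kq_u}\le 4^kk!$. The paper's argument is shorter and uses nothing about $\Psi$ beyond analyticity on a disk of radius at least $1/4$; yours depends on the specific structure $z\,\phi(\norm z^2)$.

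The one step that does not close as written is the final count, and it matters because the lemma is asserted for every $k$. You multiply the involution bound $k^{k/2}$ by the maximal weight $2^kk!$. The product $2^kk^{k/2}k!$ already exceeds $2\cdot4^kk^k$ for $k$ near $30$. Even the exact involution number times $(2k-1)!!$ grows like $k^{3k/2}$ and eventually loses.

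The fix is to correlate the number of pairs with the order of the derivative of $\phi$. A partition with $b$ pairs has $j=k-b$ blocks, and there are $\binom{k}{2b}(2b-1)!!$ such partitions. Each has weight at most $(2k-2b-1)!!$ by your estimate, with $(-1)!!=1$. Moreover $(2b-1)!!\,(2k-2b-1)!!\le(2k-1)!!$, since the top $b$ odd factors of $(2k-1)!!$ dominate $1,3,\ldots,2b-1$ termwise. Hence
\[
 \opnorm{D^kg}\le\sum_{b=0}^{\lfloor k/2\rfloor}\binom{k}{2b}(2b-1)!!\,(2k-2b-1)!!
 \le 2^{k-1}(2k-1)!!\le\frac{4^k\,k!}{2}.
\]
Your absorption argument gives the same bound for $\norm z\,|D^kg(z)[h_1,\ldots,h_k]|$. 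Adding the Leibniz term $k\opnorm{D^{k-1}g}$, which is at most $1$ for $k=1$ and at most $4^kk!/8$ for $k\ge2$, gives
\[
 \opnorm{D^k\Psi}\le 4^kk!\le 2\cdot4^kk^k
\]
for every $k\ge1$. This makes the unverified induction fallback unnecessary.
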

\begin{proof}
The order-zero bound follows from $\norm{\Psi(z)}<1$. Fix real $z$ and a real unit direction $h$, put $s=1+\norm z^2$, and consider the holomorphic expression
\[
 t\longmapsto\frac{\ip{u}{z+th}}
 {\sqrt{s+2t\ip{z}{h}+t^2}}.
\]
For $|t|\le\sqrt{s}/4$, the perturbation of the denominator's square is at most $9s/16$. There is an analytic square-root branch on this disk, with denominator modulus at least $\sqrt{7s}/4$. The numerator modulus is at most $5\sqrt{s}/4$. Thus the displayed function has modulus less than two. Cauchy's inequality gives
\[
 |D^kq_u(z)[h,\ldots,h]|\le2\,k!\,4^k.
\]
The real polarization identity for a symmetric $k$-linear form bounds its full multilinear norm by $k^k/k!$ times its diagonal norm. This gives \eqref{eq:radial} for $k\ge1$, independently of $D$.
\end{proof}

We give explicit constants for the component regularity. Let $\Pi_k$ denote the set of partitions of $[k]$, and define
\begin{align}
 A_{p,0}&=1,\qquad
 A_{p,k}=\sum_{\pi\in\Pi_k} E_{p,|\pi|}\prod_{V\in\pi}B_{|V|}
 \quad(1\le k\le p+1),\label{eq:composition-constants}\\
 C_{p,k}&=2\max_{0\le h\le k} A_{p,h}B_{k-h},\qquad
 H_{p+1}=B_{p+1}+2^{p+1}C_{p,p+1}.\label{eq:Hconstant}
\end{align}
These constants depend only on $p$. The partition form of the chain rule gives
$\opnorm{D^k(G_p\circ q_u)}\le A_{p,k}$. The factorial sums and partition counts in these formulas also give $\log H_{p+1}=O(p\log(p+2))$; a derivation is given in Appendix~\ref{sec:constant-growth}. Their growth is not optimized.

\section{The Hard Finite Sum and Its Analytic Properties}\label{sec:hard}
Choose an integer $s$ with the parity of $n$ and $1\le s\le n/8$, and put $\rho=s/n$. At each stage $j\in[T]$, independently draw a uniform hidden direction
$\Theta^{(j)}\in\{\pm1\}^D$. Conditional on it, draw a fixed table
$S^{(j)}\in\{\pm1\}^{n\times D}$, independently across columns, with column $\ell$ uniform over sign vectors of sum $s\Theta^{(j)}_\ell$. Thus
\[
 \frac1n\sum_{i=1}^n S_{i:}^{(j)}=\rho\Theta^{(j)}
\]
exactly, not just in expectation. Define
\[
 q_j(z)=q_{\Theta^{(j)}/\sqrt D}(z),\qquad
 \widetilde q_{ij}(z)=\rho^{-1}q_{S_{i:}^{(j)}/\sqrt D}(z),
\]
so that $n^{-1}\sum_i\widetilde q_{ij}=q_j$. With blocks $x=(x_1,\ldots,x_T)$, let
\begin{equation}\label{eq:Ri}
 R_i(x)=-\widetilde q_{i1}(x_1)
 -\sum_{j=2}^T G_p(q_{j-1}(x_{j-1}))
                 (\widetilde q_{ij}(x_j)+1)
 +\sum_{j=1}^T\norm{x_j}^2.
\end{equation}
Its average is
\begin{equation}\label{eq:R}
 R(x)=-q_1(x_1)-\sum_{j=2}^T G_p(q_{j-1}(x_{j-1}))(q_j(x_j)+1)
 +\sum_{j=1}^T\norm{x_j}^2.
\end{equation}
All component functions are $C^{p+1}$ and are bounded below. The total dimension is $d=TD$.

\begin{lemma}[Gap, gradient barrier, and individual regularity]\label[lemma]{lem:base}
For the above family,
\begin{align}
 R(0)-\inf R&\le2T,\label{eq:gap}\\
 \min_j q_j(x_j)<b&\quad\Longrightarrow\quad
 \norm{\nabla R(x)}>\frac13,\label{eq:barrier}\\
 \Lip(\nabla^pR_i)&\le\frac{H_{p+1}}\rho\quad(i\in[n]).\label{eq:Lip}
\end{align}
In particular, the regularity constant has no $T$ or $D$ factor.
\end{lemma}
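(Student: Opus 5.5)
I would prove the three claims of \Cref{lem:base} separately, each from the explicit form \eqref{eq:R}--\eqref{eq:Ri} and the earlier derivative bounds.

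\emph{Gap.} At $x=0$ we have $\Psi(0)=0$, so every $q_j(0)=0<a$. Hence $G_p(q_{j-1}(0))=0$ and $R(0)=0$. For the infimum, I would use $|q_j|\le1$ (since $\|\Psi\|<1$ and the direction is a unit vector) and $0\le G_p\le1$. The first term is then at least $-1$ and each link term at least $-2$. This gives $R\ge-1-2(T-1)\ge-2T$, which proves \eqref{eq:gap}.

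\emph{Gradient barrier.} Let $j$ be the smallest index with $q_j(x_j)<b$. For $j\ge2$ this means $q_{j-1}(x_{j-1})\ge b$, so $G_p(q_{j-1})=1$. The $x_j$-block of $\nabla R$ receives contributions only from two terms: the incoming link, which gives $-\nabla q_j(x_j)$ (or the first term when $j=1$), and the outgoing link, which gives $-G_p'(q_j)\,\nabla q_j\,(q_{j+1}+1)$. Since $q_{j+1}+1\ge0$ and $G_p'\ge0$, the block gradient is $2x_j-c\nabla q_j(x_j)$ with $c\ge1$.

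Now write $u=\Theta^{(j)}/\sqrt D$, $z=x_j$ and $s=1+\|z\|^2$. Then
\[
\nabla q_u(z)=\frac{u}{\sqrt s}-\frac{\ip{u}{z}z}{s^{3/2}}.
\]
Take the inner product of the block gradient with a suitable test vector: $u$ when $\|z\|$ is small, and $z/\|z\|$ or a combination when $\|z\|$ is large. I would split into the cases $\|z\|\le\tau$ and $\|z\|>\tau$.

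For small $z$, $\ip{u}{\nabla q_u}\ge(1-\|z\|^2/s)/\sqrt s$ is bounded below. Since $\ip{u}{2z}$ is small, the gradient norm is at least about $1/\sqrt{1+\tau^2}-2\tau$.

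For large $z$, use the radial component instead:
\[
\ip{z/\|z\|}{\nabla q_u}=\frac{\ip{u}{z}}{\|z\|\,s^{3/2}},
\qquad |\ip{u}{z}|\le\|z\|\,\frac{b\sqrt s}{\|z\|}
\]
when $q<b$ and the alignment is positive. So the radial component is about $2\|z\|-O(b/s)$. One has to handle the sign of $\ip{u}{z}$ and the constant $c$, which is unbounded above, and this is where care is needed. If $\ip{u}{z}\le0$, then $\ip{-u}{\text{grad}}\ge c/\sqrt s$ minus a nonpositive-correction, which is fine. If $0<q<b$, the tangential part $u-\ip{u}{\hat z}\hat z$ has norm at least $\sqrt{1-b^2s/\|z\|^2}$, and projecting onto it kills $2x_j$. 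That leaves $c\|u_\perp\|/\sqrt s$, which is bounded below when $\|z\|$ is moderate, while for large $\|z\|$ the radial $2\|z\|$ term dominates. Choosing thresholds numerically should give $>1/3$. This case analysis is the main obstacle, and I would first try the tangential/radial decomposition with threshold $\|z\|\approx1/2$.

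\emph{Regularity.} The quadratic part has zero $(p+1)$st derivative, so it does not contribute. For the first term and each link term, I would bound $D^{p+1}$ of $G_p(q_{j-1})(\widetilde q_{ij}+1)$ by the Leibniz rule. Here $\|S_{i:}\|/\sqrt D=1$, so $\widetilde q_{ij}$ is $\rho^{-1}$ times a radial coordinate and \Cref{lem:radial} applies. Combined with the chain-rule bound $A_{p,h}$, this gives the bound $\frac{C_{p,p+1}}\rho\prod_\ell(\|h^\ell_{j-1}\|+\|h^\ell_j\|)$, using $1\le1/\rho$ for the $G_p$-only terms.

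Summing over $j$, I would apply the generalized Hölder inequality with all exponents equal to $k=p+1$. The embedding $\ell_2\subset\ell_k$ for $k\ge2$ gives $\|a^\ell\|_{\ell_k}\le\|a^\ell\|_{\ell_2}\le2\|h^\ell\|$, which yields the factor $2^{p+1}$ with no dependence on $T$. The first term contributes $B_{p+1}/\rho$. Since $D^{p+1}R_i$ is continuous (the gate is flat through order $p+1$), the operator-norm bound on $D^{p+1}R_i$ gives the Lipschitz constant $H_{p+1}/\rho$ by the mean value theorem, which is \eqref{eq:Lip}.
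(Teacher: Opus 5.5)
Your gap bound and your proof of \eqref{eq:Lip} are correct and follow the paper: Leibniz rule with a single $1/\rho$ per link, H\"older with all exponents $p+1$, $\norm{a^\ell}_{k}\le\norm{a^\ell}_2\le2\norm{h^\ell}$, and the first term contributing $B_{p+1}/\rho$. Your reduction of \eqref{eq:barrier} to showing $\norm{2z-c\nabla q_u(z)}>1/3$ for $c\ge1$ and $q=q_u(z)<b$ is also the paper's.

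The gap is that this block inequality, the only nontrivial step of the lemma, is never proved. ``Choosing thresholds numerically should give $>1/3$'' is a hope, and the concrete choices you indicate fail. The projection on $u$ gives at best $(1-q^2)/\sqrt s-2\norm z$. Note that $\ip{u}{\nabla q_u}=(1-q^2)/\sqrt s$ exactly; the bound $s^{-3/2}$ you display is weaker than the $1/\sqrt{1+\tau^2}$ you then use. This estimate exceeds $1/3$ only for $\norm z$ below about $0.28$. The tangential bound $\sqrt{1-b^2s/\norm z^2}/\sqrt s$ exceeds $1/3$ only for $\norm z$ above about $0.276$. So the two estimates overlap only in a window of width below $0.01$, and a threshold near $1/2$ leaves a range uncovered. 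Also, for large $\norm z$ the radial component $2\norm z-cq/(s\norm z)$ does not dominate. Because $c$ is unbounded it can vanish, and you must then argue that the tangential component $c\norm{u_\perp}/\sqrt s$ is large precisely because $c$ is. Your route can be completed, but only through this tight bookkeeping, which the proposal does not supply.

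The idea you are missing is the paper's exact scalar reduction, which disposes of the unbounded $c$ at once. With $\zeta=(1+\norm x^2)^{-1}$ one has $\ip{x}{\nabla q_u(x)}=q\zeta$, $\norm{\nabla q_u(x)}^2=\zeta(1-q^2-q^2\zeta)$, and $0<\zeta\le1-q^2$. Hence $\norm{2x-c\nabla q_u(x)}^2=4(\zeta^{-1}-1)-4cq\zeta+c^2\zeta(1-q^2-q^2\zeta)$ is a convex quadratic in $c$.
\begin{itemize}
\item For $0\le q\le1/4$, its vertex is at most $2q/(1-q^2)^2<1$, so $c=1$ is the worst case over $c\ge1$. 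The resulting expression has $\zeta$-derivative at most $-3$, so it is minimized at $\zeta=1-q^2$. There the norm equals $((1-q^2)^2-2q)/\sqrt{1-q^2}\ge97/256>1/3$.
\item For $q\le0$, again $c=1$ is worst and the cross term is nonnegative. The bound $\norm{\nabla q_u}\ge(1+\norm x^2)^{-3/2}$, the smallest singular value of $D\Psi$, gives a squared norm at least $4\norm x^2+(1+\norm x^2)^{-3}\ge1$.
\end{itemize}
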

\begin{proof}
Since $|q_j|<1$ and $0\le G_p\le1$, $R(0)=0$ and $R\ge-2T$, proving \eqref{eq:gap}.

For the gradient barrier, we first prove that for every unit $u$, $c\ge1$, and $q=q_u(x)\le b=1/4$,
\begin{equation}\label{eq:radial-barrier}
 \norm{2x-c\nabla q_u(x)}>\frac13.
\end{equation}
Let $\zeta=(1+\norm x^2)^{-1}$. Direct differentiation gives
\[
 \ip{x}{\nabla q_u(x)}=q\zeta,\qquad
 \norm{\nabla q_u(x)}^2=\zeta(1-q^2-q^2\zeta),\qquad
 0<\zeta\le1-q^2.
\]
Consequently
\[
 \norm{2x-c\nabla q_u(x)}^2
 =4(\zeta^{-1}-1)-4cq\zeta+c^2\zeta(1-q^2-q^2\zeta).
\]
If $0\le q\le1/4$, its unconstrained minimizer in $c$ is
\[
 \frac{2q}{1-q^2-q^2\zeta}
 \le\frac{2q}{(1-q^2)^2}<1.
\]
Hence $c=1$ minimizes over $c\ge1$. The derivative of the resulting expression with respect to $\zeta$ is at most $-4+1=-3$, so its minimum is at $\zeta=1-q^2$. The square root there is
\[
 \frac{(1-q^2)^2-2q}{\sqrt{1-q^2}}
 \ge (1-b^2)^2-2b=\frac{97}{256}>\frac13.
\]
If $q\le0$, $c=1$ again minimizes. The negative cross term is now nonnegative, and the smallest singular value of $D\Psi(x)$ is $(1+\norm x^2)^{-3/2}$. Thus the squared norm is at least
\[
 4\norm x^2+(1+\norm x^2)^{-3}\ge1,
\]
using $(1+t)^{-3}\ge1-3t$. This proves \eqref{eq:radial-barrier}.

Let $j$ be the first index with $q_j(x_j)<b$. Its incoming coefficient is one, while its outgoing contribution gives
\[
 \nabla_{x_j}R(x)=2x_j-c_j\nabla q_j(x_j),\qquad
 c_j=1+\ind_{\{j<T\}}G_p'(q_j(x_j))(q_{j+1}(x_{j+1})+1)\ge1.
\]
Apply \eqref{eq:radial-barrier}.

Finally, put $k=p+1\ge3$. For one link and arbitrary global directions $h^1,\ldots,h^k$, the product rule and \eqref{eq:composition-constants} imply
\begin{equation}\label{eq:link-bound}
 |D^k[ G_p(q_{j-1})(\widetilde q_{ij}+1)]
      [h^1,\ldots,h^k]|
 \le\frac{C_{p,k}}\rho
 \prod_{\ell=1}^k(\norm{h^\ell_{j-1}}+\norm{h^\ell_j}).
\end{equation}
Indeed, a derivative of the weak factor of positive order $r$ is bounded by $B_r/\rho$, and its order-zero value plus one is bounded by $2B_0/\rho$. Each subset in the product rule has coefficient bounded by $C_{p,k}/\rho$, and summing its block products gives the product on the right.

For $a_j^\ell=\norm{h^\ell_{j-1}}+\norm{h^\ell_j}$, bounded overlap gives
\[
 \norm{a^\ell}_k\le\norm{a^\ell}_2\le2\norm{h^\ell}.
\]
H\"older's inequality therefore yields
\[
 \sum_{j=2}^T\prod_{\ell=1}^k a_j^\ell
 \le 2^k\prod_{\ell=1}^k\norm{h^\ell}.
\]
The initial term contributes $B_k/\rho$, and the quadratic contributes zero. Consequently,
\[
 \sup_x\opnorm{D^{p+1}R_i(x)}\le H_{p+1}/\rho.
\]
All these derivatives are continuous, including at the gate interfaces by the extra flatness. Integrating along the segment from $x$ to $y$ proves \eqref{eq:Lip}.
\end{proof}
\begin{remark}
Bounded-overlap derivative estimates already occur in higher-order zero-chain constructions \citep{carmon2020,emmenegger2022}. Here the estimate is applied to radial blocks with a weak encoding: only one factor of each link carries $1/\rho$, and the blockwise H\"older argument preserves that cost uniformly in both $T$ and $D$.
\end{remark}

\section{\texorpdfstring{Exact-$p$-jet hiding}{Exact-p-jet hiding} and the row information bound}\label{sec:info}
The analytic construction is now admissible, but its gradient barrier is useful only if an algorithm cannot open many stages quickly. The argument has four steps: bound the information in a single table; establish stage freshness before its first row is exposed; censor replies after a stage\textquotesingle s first alignment; and couple the public truncated transcript to the complete oracle. Extra disclosures are used only for information analysis, and the public algorithm ignores them. Future-stage independence is established without conditioning on the absence of earlier future alignments. The tables are fixed before any query, so repeated component queries at one stage reveal the same row, not fresh samples.

\subsection{A censored one-table experiment}
For one independent pair $(\Theta,S)$ as above, let $W$ be arbitrary independent side information including the learner's random tape. The learner may receive one free initialization row whose index is chosen using $W$ before the row is seen. At each of at most $B_*$ rounds, it chooses an adaptive index $I_t$ and an adaptive vector $V_t$ of norm at most one, then observes row $S_{I_t:}$. It may also receive any deterministic function of the query and previously revealed rows. It receives no alignment flag and no hidden direction. Let $K_t$ be the number of distinct rows known before round $t$, including the free initialization row. Define
\[
 r_\rho=\left\lfloor\frac1{4096}\min\{n,\rho^{-2}\}\right\rfloor,
 \qquad
 \mathcal E_r=\left\{\exists t\le B_*:\ K_t<r,\
 D^{-1/2}\ip{\Theta}{V_t}>a\right\}.
\]

\begin{lemma}[Censored adaptive row information]\label[lemma]{lem:row-info}
Assume $\rho\le1/8$ and $1\le r\le n/4096$. Let $\mathcal T_r$ be the transcript stopped immediately after the $r$th distinct row is revealed, or at the horizon if fewer are revealed. Then, conditionally on almost every $W=w$,
\begin{equation}\label{eq:information}
 I(\Theta;\mathcal T_r\mid W=w)\le Dr\rho^2.
\end{equation}
If $r=r_\rho\ge2$ and
\begin{equation}\label{eq:one-stage-D}
 D\ge2048+512\log(B_*+2),
\end{equation}
then $\Prob(\mathcal E_r\mid W=w)<1/8$.
\end{lemma}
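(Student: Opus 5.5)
The proof has two parts: an information bound via the chain rule for mutual information over newly revealed rows, and a deduction of the alignment bound from it by relative-entropy data processing against an independent reference direction.

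\emph{Information bound.} Since the adaptive choices $I_t,V_t$ and any deterministic side replies are functions of $W$ and previously revealed rows, the only new information in the transcript is each newly revealed distinct row. Repeated indices reveal rows already known and contribute nothing. So conditionally on $W=w$, the chain rule gives $I(\Theta;\mathcal T_r\mid w)\le\sum_{k=1}^{r}I(\Theta;S_{J_k:}\mid S_{J_1:},\dots,S_{J_{k-1}:},w)$, where $J_k$ is the $k$th distinct index. That index is a function of the earlier rows and $w$, and the stopping rule is adapted to this filtration. Columns are independent given $\Theta$, and $\Theta$ has independent coordinates, so each term splits over the $D$ columns. For one column $\ell$, the conditional law of a fresh entry given $\Theta_\ell$ and $k-1$ revealed entries is that of an unrevealed entry of a uniformly random sign vector with sum $s\Theta_\ell$. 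Its mean is $(s\Theta_\ell-\sigma_{k-1})/(n-k+1)$, where $\sigma_{k-1}$ is the sum of the revealed entries. I will bound the conditional mutual information by the chi-square divergence between the two $\Theta_\ell$-conditional Bernoulli laws, using the mixture-KL inequality. Since $k\le r\le n/4096$ and $|\sigma_{k-1}|\le k$, the mean is $\pm\rho+O(k/n)$. The required bound is $O(\rho^2)$ per entry, with a constant at most $1$. The mean difference between $\Theta_\ell=\pm1$ is $2s/(n-k+1)\le 2.01\rho$. A chi-square of order $\rho^2$ needs the Bernoulli variance bounded below, and this holds because the mean stays at most $1/8+o(1)$ in absolute value, using $\rho\le1/8$. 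I also have to check the averaging over the posterior of $\Theta_\ell$. I expect this constant bookkeeping, per entry at most $\rho^2$, to be the main technical step. Summing over $D$ columns and $r$ rows gives \eqref{eq:information}.

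\emph{Alignment event.} Let $\mathcal E_r$ occur at some round $t$ with $K_t<r$. The vector $V_t$ is then chosen before the $r$th distinct row is revealed, so it is measurable with respect to the stopped transcript $\mathcal T_r$. The first-hit index $\tau$ is therefore determined by $\mathcal T_r$ together with $\Theta$. Compare the true joint law of $(\Theta,\mathcal T_r)$ under $w$ with the product law in which $\Theta'$ is an independent uniform sign vector. Under the product law, each $V_t$ is independent of $\Theta'$. Hoeffding's inequality gives $\Prob(D^{-1/2}\langle\Theta',V_t\rangle>a)\le e^{-Da^2/2}=e^{-D/128}$, and a union bound over at most $B_*+1$ rounds gives $\Prob'(\mathcal E_r)\le(B_*+1)e^{-D/128}$. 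Under \eqref{eq:one-stage-D}, this is at most $e^{-16}/(B_*+2)^3$, which is tiny.

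The divergence between the two laws equals the mutual information, at most $Dr\rho^2\le D/4096$. This is linear in $D$, so Pinsker's inequality is too weak and I need a change-of-measure bound. Data processing on the indicator of $\mathcal E_r$ gives $\mathrm{kl}(\Prob(\mathcal E_r)\,\|\,\Prob'(\mathcal E_r))\le D/4096$. If $\Prob(\mathcal E_r)=\pi\ge1/8$, then $\mathrm{kl}\ge\pi\log(1/\Prob'(\mathcal E_r))-\log2\ge\frac18\cdot\frac{D}{128}-\log2$. This contradicts the bound $D/4096+\log 2$ once $D/1024-D/4096\ge2\log2$, which holds since $D\ge2048$. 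I use $r=r_\rho\ge2$ only to ensure the stopping index is well defined with the free initialization row counted. The hypothesis $r\le n/4096$ holds automatically for $r=r_\rho$.
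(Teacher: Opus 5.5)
You have the same architecture as the paper: a chain rule over newly exposed distinct rows, and a per-cell bound from the exchangeable conditional law $p_\theta=\frac{n+s\theta-d-\Sigma_{d,\ell}}{2(n-d)}$. Then you compare with the product law, apply a Rademacher tail and a union bound, and use data processing on the indicator of $\mathcal E_r$ via $\mathrm{kl}(u\|q)\ge u\log(1/q)-\log 2$.

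\textbf{Gap 1: the per-cell constant.} You correctly flag this as the crux, but the tool you propose cannot reach constant $1$. A $\chi^2$ bound on the per-cell information gives $\pi(1-\pi)(p_+-p_-)^2/(\bar p(1-\bar p))$ with $p_+-p_-=s/(n-d)$. At $\pi=\tfrac12$ with balanced revealed entries ($\Sigma_{d,\ell}=0$, $d\ge2$) this equals $(s/(n-d))^2>\rho^2$, and no other reference Bernoulli improves it there. With your variance bound from $|\text{mean}|\le 513/4095$, the constant comes out near $1.016$. The inequality $\mathrm{KL}\le\chi^2$ throws away the factor $\tfrac12$ of the local expansion, and some slack below $1$ is needed to absorb $(n/(n-d))^2\le(4096/4095)^2$.

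The paper's repair: write the per-cell information as the Jensen gap $h(\bar p)-\pi h(p_+)-(1-\pi)h(p_-)$ of binary entropy. Since $|2p_\theta-1|\le 513/4095$, both $p_\pm$ lie in $[1/4,3/4]$, where $-h''=1/(u(1-u))\le 16/3$. So the gap is at most $\tfrac12\cdot\tfrac{16}{3}\,\pi(1-\pi)(p_+-p_-)^2\le\tfrac23(s/(n-d))^2\le\rho^2$, uniformly in $\pi$. Your looser constant would still suffice for the probability claim, but not for the first displayed inequality as stated.

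The uniformity in $\pi$ also matters for your column splitting. ``$\Theta$ has independent coordinates'' is a statement about the prior. Under adaptive selection you must either check that the history's likelihood factorizes over columns, or do as the paper does. That means exposing the new row cell by cell, noting each cell depends on $\Theta$ only through $\Theta_\ell$ given the history, and using a bound valid for every posterior.

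\textbf{Gap 2: the $\log(1/q)$ step.} In the second part, $\log(1/\Prob'(\mathcal E_r))\ge D/128$ is false. The union bound gives only $\log(1/q)\ge D/128-\log(B_*+1)$, and your own estimate $q\le e^{-16}(B_*+2)^{-3}$ does not yield $D/128$ either. A telltale sign is that your final arithmetic never uses the $512\log(B_*+2)$ part of the hypothesis $D\ge 2048+512\log(B_*+2)$. That part is indispensable, since otherwise $B_*e^{-D/128}$ need not be small.

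The correct finish is the paper's. From $\log(B_*+2)\le (D-2048)/512<D/512$ you get $\log(1/q)\ge 3D/512$. Hence $u\le (D/4096+\log 2)/(3D/512)<1/8$, because $\log 2<D/2048$ when $D\ge 2048$. With these two repairs your argument coincides with the paper's proof.
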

\begin{proof}
Condition on $W=w$ and pad the stopped transcript to fixed length by cemetery symbols. Query choices contribute no conditional information because they are functions of preceding observations. If the free initialization row is present, treat it as a zero-th exposure and include it in the transcript; it is charged to the row-information budget, though not to the stage's query budget.

Suppose $d<r$ distinct rows have been seen and their sum in column $\ell$ is $\Sigma_{d,\ell}$. For a previously unqueried index, exchangeability gives
\[
 \Prob(S_{I_t\ell}=1\mid\Theta_\ell=\theta,\text{past})
 =\frac{n+s\theta-d-\Sigma_{d,\ell}}{2(n-d)}=:p_\theta.
\]
The two parameters differ by $s/(n-d)$ and lie in $[1/4,3/4]$, since $s\le n/8$ and $d\le n/4096$. On this interval the negative second derivative of binary entropy is at most $16/3$. For any posterior probability $\pi=\Prob(\Theta_\ell=1\mid\text{retained history})$, the second-order Jensen gap is at most $\tfrac12(16/3)\pi(1-\pi)(p_+-p_-)^2$, hence at most
\[
 \frac{2}{3}\left(\frac{s}{n-d}\right)^2\le\rho^2.
\]
Expose the new row coordinate by coordinate. Given the hidden vector and the revealed rows, the remaining columns are conditionally independent. To justify this under adaptive row selection, fix a possible retained history: each selected index is then fixed, and consistency with the policy imposes no additional constraint beyond the cells already observed. The likelihood of these cells factors across columns conditional on $\Theta$. When revealing coordinates of the new row successively, the channel to its next cell consequently depends on the hidden vector only through $\Theta_\ell$. The entropy bound above is uniform in its posterior probability; it does not assume that hidden coordinates are posterior-independent. Thus the preceding information bound applies to each of the $D$ new cells. Repeated rows contribute no information. The conditional chain rule gives \eqref{eq:information}, including the optional first row.

Now let $r=r_\rho$. The information is at most $D/4096$. Under the product law consisting of the uniform hidden vector and an independent transcript with its actual marginal distribution, every retained $V_t$ is independent of $\Theta$. A Rademacher tail bound and a union bound give
\[
 q:=\Prob_{\rm product}(\mathcal E_r)
 \le B_*\exp(-a^2D/2)=B_*e^{-D/128}.
\]
Writing $u$ for the true probability of $\mathcal E_r$, relative-entropy data processing yields
\[
 I(\Theta;\mathcal T_r\mid W=w)
 \ge u\log(1/q)-\log2.
\]
Condition \eqref{eq:one-stage-D} gives
$\log(1/q)\ge3D/512$ and, because $D\ge2048$,
$D/4096+\log2<3D/4096$. Hence
\[
 u\le\frac{D/4096+\log2}{D/128-\log B_*}<\frac18.
\]
The event is measurable from the retained transcript and $\Theta$: the query selecting the $r$th row is retained before stopping. This is why the stopping convention is specified explicitly.
\end{proof}

\subsection{A stronger truncated simulator and first-hit censoring}
For $0\le h\le T$, define $R_i^{[h]}$ by retaining the first term in \eqref{eq:Ri}, the quadratic, and only links with $2\le j\le\min\{h+1,T\}$. The completed prefix initially has length $h=0$; the current stage is $j=h+1$. At a query point $x$, the simulator checks the current alignment before giving its reply. If $q_j(x_j)>a$, it promotes exactly one stage, to $h+1$; otherwise it leaves $h$ unchanged. Its public reply is the exact $p$-jet of the truncated function at the updated prefix. Once $h=T$, keep the prefix fixed and return the full component $p$-jet on every remaining call; no further completion tests or disclosures are made.

For information analysis only, enrich this simulator by revealing the selected row of the current table. On a completion response it additionally reveals the entire data of the completed stage and the selected row of the newly current stage. The latter is its one free initialization row. These disclosures make the simulator more informative, not less.

\begin{lemma}[Exact jet truncation and reply factorization]\label[lemma]{lem:jet}
If every gate of a link omitted from $R_i^{[h]}$ has input at most $a$, then its full $p$-jet is zero and
\[
 \cJ^pR_i(x)=\cJ^pR_i^{[h]}(x).
\]
Before completion of the current stage $j$, the public truncated reply is a deterministic function of the completed-stage data, the query, and row $S^{(j)}_{i:}$. At completion it is a function of these data, $\Theta^{(j)}$, and row $S^{(j+1)}_{i:}$ if $j<T$.
\end{lemma}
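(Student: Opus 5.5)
The plan is to prove the jet identity link by link, and then read off the factorization from how each surviving term of $R_i^{[h]}$ depends on the randomness. Write
\[
 R_i-R_i^{[h]}=-\sum_{j\ge h+2}G_p(q_{j-1}(x_{j-1}))\bigl(\widetilde q_{ij}(x_j)+1\bigr),
\]
a finite sum of omitted links. By hypothesis each omitted link has gate input $q_{j-1}(x_{j-1})\le a$.

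\textbf{Step 1: each omitted link has zero $p$-jet.} Since $G_p$ is flat through order $p+1$ on $(-\infty,a]$, the derivatives $G_p^{(k)}(q_{j-1}(x_{j-1}))$ vanish for $0\le k\le p+1$. The value at $s=a$ comes from continuity of the $C^{p+1}$ function $G_p$ with its one-sided derivatives from the polynomial $Q_p$, which vanish to order $p+1$ at $t=0$. By the Fa\`a di Bruno partition formula, every term of $D^k(G_p\circ q_{j-1})$ for $k\le p$ carries a factor $G_p^{(|\pi|)}$ with $1\le|\pi|\le k$. The order-zero term is $G_p$ itself, so all of these vanish at $x$. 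The Leibniz rule then shows that every derivative of order $\le p$ of the product with $\widetilde q_{ij}+1$ contains some derivative of $G_p\circ q_{j-1}$ of order $\le p$, and hence vanishes.

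This is a pointwise statement at the query point, so it covers the boundary case $q_{j-1}(x_{j-1})=a$ as well. Summing the omitted links gives $\cJ^pR_i(x)=\cJ^pR_i^{[h]}(x)$.

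\textbf{Step 2: factorization before completion.} Let the prefix have length $h$ with current stage $j=h+1$, and suppose the query is not a completion. Then $R_i^{[h]}$ consists of the following pieces:
\begin{itemize}
\item $-\widetilde q_{i1}$, which depends on $S^{(1)}_{i:}$;
\item the links $2\le j'\le h+1$, which depend on $\Theta^{(j'-1)}$ through the gates and on $S^{(j')}_{i:}$ through the weak factors;
\item the quadratic, which depends on no randomness.
\end{itemize}
The stages $1,\dots,h$ are completed, so their hidden vectors and tables count as completed-stage data. The only remaining dependence is on row $S^{(j)}_{i:}$, through $\widetilde q_{ij}$ in the last retained link (or in the first term if $j=1$). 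Crucially, $\Theta^{(j)}$ does not enter, because the link that would use $q_j$ as a gate input is omitted. Each $\widetilde q$ is an explicit function of its row and $x$, so the whole jet is a deterministic function of the query, the completed data, and that row.

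\textbf{Step 3: factorization at completion.} On a completion query the prefix becomes $h+1$. The retained set then gains the link $j+1$, namely $G_p(q_j(x_j))(\widetilde q_{i,j+1}(x_{j+1})+1)$. This link depends on $\Theta^{(j)}$ and on row $S^{(j+1)}_{i:}$. The earlier terms, including $\widetilde q_{ij}$, still depend only on the completed data together with $S^{(j)}_{i:}$. If $j=T$, no link is added.

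I read "these data" in the statement as including row $S^{(j)}_{i:}$, which the enriched simulator discloses anyway as part of the completed-stage data.

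\textbf{Where care is needed.} The main point requiring care is Step 1 at the interface $s=a$, where the hypothesis allows equality. The extra order of flatness in \eqref{eq:gate} guarantees that $G_p$ is genuinely $C^{p+1}$ with all derivatives zero there, so no one-sided ambiguity arises.

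A second point is the case $h=T$. There the truncated function equals the full component, and the identity is trivial.
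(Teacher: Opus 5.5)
Your proof is correct and follows essentially the same route as the paper. The paper argues the same two points: every product-rule and chain-rule term of an omitted link's derivatives through order $p$ contains a factor $G_p^{(k)}(q(x))$ with $0\le k\le p$, which vanishes whenever $q(x)\le a$ (equality included); and the factorization is read off from which rows and directions enter the retained incoming links and the single outgoing link added on completion. Your version just spells out this term-by-term bookkeeping more explicitly.
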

\begin{proof}
An omitted link has form $G_p(q(x))(\widetilde q_i(y)+1)$. Every derivative through order $p$ is a sum of product-rule and chain-rule terms containing some $G_p^{(k)}(q(x))$ with $0\le k\le p$. All such factors vanish for $q(x)\le a$, including equality. For an active incoming link, every derivative of $\widetilde q_i$ is determined by its selected row. The outgoing link enters only on completion, when the current direction may be revealed. No later hidden table or direction enters the truncated reply.
\end{proof}

\begin{lemma}[Prefix measurability and stage freshness]\label[lemma]{lem:prefix-freshness}
Let $\xi$ be the algorithm's private random tape, let
$\mathcal Z_j=(\Theta^{(j)},S^{(j)})$, and let $h_t$ be the simulator's prefix length after response $t$.
For $g<T$, the event $\{h_t=g\}$ and the enriched history on that event are measurable with respect to
\[
 \mathcal A_{g+1}=\sigma(\xi,\mathcal Z_1,\ldots,\mathcal Z_{g+1}).
\]
For $g=T$, $\mathcal A_T$ suffices. In particular, conditional on the enriched history and $h_t=g<T$, stages strictly beyond $g+1$ have their independent prior distributions.
Before the initialization row of a reached stage $j$ is exposed, its table also has its independent prior conditional on the entire earlier-stage data and $\xi$.
\end{lemma}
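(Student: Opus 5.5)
The plan is an induction on the number of responses $t$. The inductive claim is that, on each event $\{h_t=g\}$, both the event and the enriched history up to response $t$ are measurable with respect to $\mathcal A_{g+1}$ (read as $\mathcal A_T$ when $g=T$). For the base case, before any query we have $h_0=0$. The history is then only the tape $\xi$, together with the free initialization row of stage $1$ if we choose to expose it. The index of that row is a function of $\xi$ and the row itself is a function of $\mathcal Z_1$, so everything is $\mathcal A_1$-measurable.

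For the inductive step, fix $g$ and look at the event $\{h_t=g'\}\cap\{h_{t+1}=g\}$. The constraint is $g\in\{g',g'+1\}$, because each query promotes at most one stage. The query point and index at step $t+1$ are measurable functions of $\xi$ and the enriched history. On $\{h_t=g'\}$ this history is $\mathcal A_{g'+1}$-measurable by the inductive hypothesis. Write $j=g'+1$ for the current stage. The completion test $q_j(x_j)>a$ depends only on $\Theta^{(j)}$ and the query, so it is $\mathcal A_{g'+1}$-measurable. There are then two cases.

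\emph{No promotion} ($g=g'$). By \Cref{lem:jet}, the public reply is a deterministic function of the completed-stage data, the query, and row $S^{(j)}_{i:}$. The enrichment adds the selected row of $S^{(j)}$. All of these are $\mathcal A_{g+1}$-measurable.

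\emph{Promotion} ($g=g'+1$). The reply depends on $\Theta^{(j)}$ and, if $j<T$, on row $S^{(j+1)}_{i:}$. The enrichment reveals $\mathcal Z_j$ and that same row of stage $j+1=g+1$. All of this is $\mathcal A_{g+1}$-measurable.

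To close the step, decompose $\{h_{t+1}=g\}$ as a finite union over $g'\in\{g-1,g\}$ of these measurable pieces. This also uses that $\mathcal A_{g'+1}\subseteq\mathcal A_{g+1}$. Once $h=T$, replies are full jets of functions built from $\mathcal Z_1,\dots,\mathcal Z_T$, so $\mathcal A_T$ suffices.

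The freshness consequence follows from independence. The collection $(\xi,\mathcal Z_1,\dots,\mathcal Z_T)$ is mutually independent, so $(\mathcal Z_k)_{k>g+1}$ is independent of $\mathcal A_{g+1}$. Take an event of the form $\{h_t=g\}\cap\{\text{history}\in A\}$; by the above it lies in $\mathcal A_{g+1}$. The conditional law of the later stages given such an event is therefore the product prior. One point needs care: the conditioning is on a random-time-indexed history, not on a fixed $\sigma$-field. I would handle this by conditioning on the pair $(t,\text{history})$, taking each $t$ separately, and using the measurability on each piece.

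For the last sentence, let the stage $j$ be reached at response $t$ through a promotion from $g'=j-2$. Just before the initialization row of stage $j$ is revealed, the relevant information is everything up to the completion decision of stage $j-1$. This consists of the history before response $t$, which is $\mathcal A_{j-1}$-measurable by the inductive hypothesis with $g'+1=j-1$, together with the completion test and reply components coming from $\mathcal Z_{j-1}$. Together these are $\mathcal A_{j-1}$-measurable. Since $\mathcal Z_j$ is independent of $\mathcal A_{j-1}$, its table has its prior law conditional on the earlier-stage data and $\xi$.

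I expect the main obstacle to be making the bookkeeping of the promotion step exact. The reply at a completion mixes data from stage $j$ and stage $j+1$. I must check that the index of the row of stage $j+1$ is chosen before that row is seen, which holds because the index is the current query index. I must also check that the gate inputs omitted from the truncated function cannot leak later tables. This uses \Cref{lem:jet}: the truncated function simply contains no later links, so nothing from those tables appears in the reply.
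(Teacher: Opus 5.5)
Your proposal is correct and follows the paper's own proof. Both argue by induction on $t$, writing $\{h_t=g\}$ as the union of a noncompletion branch from prefix $g$ and a completion branch from prefix $g-1$, each measurable in $\mathcal A_{g+1}$, and then use independence of $(\xi,\mathcal Z_1,\ldots,\mathcal Z_T)$ for future stages. For the last claim, both order the completion response conceptually so that the stage-$(j-1)$ data come before the stage-$j$ initialization row, whose index is fixed by then.
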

\begin{proof}
Induct on $t$. On a branch with pre-query prefix $g$, the query and completion test use only $\mathcal A_{g+1}$. A noncompletion reply uses earlier data and one row of stage $g+1$. A completion reply may reveal the complete data of stage $g+1$ and one row of stage $g+2$, but nothing later. The event $\{h_t=g\}$ is the disjoint union of a noncompletion branch from prefix $g$ and a completion branch from prefix $g-1$; both have measurable representations in $\mathcal A_{g+1}$. This proves the first statement, including event measurability, rather than only measurability conditional on a possibly informative selection event. Independence of $\xi,\mathcal Z_1,\ldots,\mathcal Z_T$ gives the future-stage assertion.

For the last assertion, the query completing stage $j-1$, its completion test, and the selection of its component index are functions of $\xi,\mathcal Z_1,\ldots,\mathcal Z_{j-1}$ before the response is generated. Order that response conceptually by revealing completed-stage data first and the stage-$j$ initialization row second. Reaching this intermediate instant involves no stage-$j$ data. Conditional on the earlier variables, the new table retains its prior and its initialization-row index is fixed. This conceptual ordering changes no observable response available before the next query.
\end{proof}

Fix a reached stage $j$ and condition at the intermediate instant in \Cref{lem:prefix-freshness}. Independent information may be supplied without charge in the one-table experiment, so conditioning on the complete earlier tables is legitimate. No conditioning on the absence of a future-alignment event is performed.

Couple the real enriched stage to a counterfactual experiment that always uses the noncompletion reply map and always reports noncompletion, even after a first alignment would have occurred. This is a censored row learner of the preceding type: its reply maps depend only on rows and the independent old data. Up to the first completion query, both interactions have the same pre-query history and choose the same query. In particular,
\begin{equation}\label{eq:first-hit}
 \{\text{completion with fewer than }r_\rho\text{ previously exposed rows}\}
 \subseteq\mathcal E_{r_\rho}.
\end{equation}
The completion reply, and any direction it reveals, occurs after the event in \eqref{eq:first-hit}; it is not included in the information transcript. Noncompletion flags are forced constants in the censored experiment and contribute no information there.

\begin{corollary}[Each reached stage is rarely short]\label[corollary]{cor:short}
Under \eqref{eq:one-stage-D}, a reached stage completes in fewer than $r_\rho$ calls assigned to it with conditional probability at most $1/8$, before its initialization row is exposed. An unreached stage is declared not short.
\end{corollary}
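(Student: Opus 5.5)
The plan is to reduce the statement to \Cref{lem:row-info}, applied at the intermediate instant of \Cref{lem:prefix-freshness}. Fix a reached stage $j$ and condition on $W=(\xi,\mathcal Z_1,\ldots,\mathcal Z_{j-1})$, which carries the private tape and all earlier-stage data. By the last assertion of \Cref{lem:prefix-freshness}, the pair $\mathcal Z_j=(\Theta^{(j)},S^{(j)})$ still has its independent prior conditional on $W$ at this instant. The initialization-row index is already a fixed function of $W$. This realizes exactly the setup of the one-table experiment: independent side information, and one free initialization row whose index is chosen before that row is seen. Stages beyond $j$ are also independent of $W$ and $\mathcal Z_j$ by the same lemma. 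Their data, as revealed by the simulator before stage $j$ completes, contribute nothing, since by \Cref{lem:jet} the noncompletion replies depend only on $W$, the query, and the selected row of $S^{(j)}$. Any randomness that the algorithm draws later can be folded into $W$.

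Next, build the censored learner. It runs the algorithm against the noncompletion reply map for stage $j$ and always reports noncompletion. At each call assigned to stage $j$, it sets $I_t$ to the queried index and $V_t$ to the unit vector $\Psi(x_j)/\norm{\Psi(x_j)}$, or to any fixed unit vector if $x_j=0$. Since $\norm{\Psi(x_j)}<1$ and $a>0$, the inequality $q_j(x_j)>a$ implies $D^{-1/2}\ip{\Theta^{(j)}}{V_t}>a$. The horizon $B_*$ is the total query cap, which is what \eqref{eq:one-stage-D} refers to; calls assigned to other stages are deterministic functions of $W$ and past rows, so they can be treated as free replies. By the first-hit coupling, the real and censored interactions agree up to the first completion query. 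This gives the inclusion \eqref{eq:first-hit}: completion with fewer than $r_\rho$ exposed rows implies $\mathcal E_{r_\rho}$.

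It remains to convert rows into calls. Each call assigned to stage $j$ exposes at most one new row. The free initialization row is counted in $K_t$, so fewer than $r_\rho$ assigned calls means at most $r_\rho$ exposed rows before the completing query. I expect the main obstacle to be this off-by-one bookkeeping, with the initialization row counted in $K_t$ but not charged as a call. I would handle it by strict counting. If the completing query is the $c$-th assigned call with $c<r_\rho$, then the rows known before it number at most $1+(c-1)<r_\rho$, so $K_c<r_\rho$ and $\mathcal E_{r_\rho}$ holds. The bound $\Prob(\mathcal E_{r_\rho}\mid W)<1/8$ from \Cref{lem:row-info} then gives the claim.

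The degenerate cases need separate treatment. If $r_\rho\le1$, a stage completing in fewer than $r_\rho$ calls would use zero calls, which is impossible because completion requires a query, so the probability is zero. The hypotheses $\rho\le1/8$ and $r_\rho\le n/4096$ hold by the choice of $s$ and by the definition of $r_\rho$. Unreached stages are declared not short by convention.
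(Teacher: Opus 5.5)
Your proposal is correct and follows essentially the same route as the paper: count at most $1+(c-1)=c<r_\rho$ pre-query rows on the $c$th assigned call, apply the first-hit inclusion \eqref{eq:first-hit}, and invoke \Cref{lem:row-info} conditionally on $(\xi,\mathcal Z_1,\ldots,\mathcal Z_{j-1})$, which is legitimate by \Cref{lem:prefix-freshness}. The paper additionally states the step of averaging this bound down to the coarser, actually observed pre-initialization history, which you leave implicit; your explicit choice of $V_t$ and your treatment of the degenerate case $r_\rho\le1$ are harmless additions.
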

\begin{proof}
On its $q$th assigned call with $q<r_\rho$, it has seen at most its initialization row and $q-1$ previous response rows, so the number of distinct pre-query rows is at most $q<r_\rho$. Apply \eqref{eq:first-hit} and \Cref{lem:row-info}. Conditioning on all earlier data is legitimate because they are independent of the new table. Averaging over that larger conditioning field gives the same bound conditional on the actually observed pre-initialization history.
\end{proof}

\subsection{Adding stage costs and coupling to the true oracle}
Assign each call to its pre-query current stage. If $N\le r_\rho J/2$ and $J$ stages complete, at least $J/2$ of those stages must be short, since every other stage consumes at least $r_\rho$ disjoint assigned calls. By \Cref{cor:short}, the expected number of short stages among the first $J$ is at most $J/8$. Markov's inequality gives
\begin{equation}\label{eq:stagecount}
 \Prob(h_N\ge J)\le\frac14.
\end{equation}
No independence of the stage durations is assumed.

Let $\mathcal B$ be the event that at some queried point a block strictly beyond the pre-query current stage has alignment greater than $a$, also including blocks strictly beyond the final current stage at the unqueried output. At every history of the enriched truncated simulator, all such strictly future stages retain their independent prior. This is precisely the conditional independence in \Cref{lem:prefix-freshness}; it is valid even when prefix lengths and component indices are adaptive. Therefore, regardless of query radii,
\begin{equation}\label{eq:future}
 \Prob(\mathcal B)\le(N+1)T e^{-D/128},
\end{equation}
because $\norm{\Psi(x)}<1$. This argument is in the truncated simulator and does not condition on the absence of future alignments.

To relate it to a true exact-oracle algorithm, run the same public policy and random tape in the true interaction and in the truncated simulator, ignoring the simulator's extra disclosures. On $\mathcal B^c$, their public transcripts agree inductively. On a noncompletion query all omitted gates, starting with the current outgoing gate, are flat. On a completion query the outgoing link is included, and every still-omitted gate is flat. \Cref{lem:jet} applies in both cases. The public outputs agree as well. We do not assert equality between a public transcript and an enriched transcript; enrichment is used only for domination and information analysis.

\begin{proposition}[Unfinished direction under arbitrary exact-$p$-jet access]\label[proposition]{prop:sequential}
Let $r_\rho\ge2$, $1\le J<T$, and $N\le r_\rho J/2$. If
\begin{equation}\label{eq:D-final}
 D\ge\max\{2048+512\log(N+2),\ 128\log(128(N+1)T)\},
\end{equation}
then for every randomized adaptive true exact-$p$-jet algorithm, with probability at least $95/128$ its output has $q_k(\widehat x_k)\le a$ in some block. The same conclusion holds after arbitrary positive input scaling.
\end{proposition}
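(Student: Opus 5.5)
The plan is to bound the probability of the complementary event, that the output has $q_k(\widehat x_k)>a$ in every block, by a union of three bad events: too many completed stages, $\mathcal B$, and failure of the public coupling. Each is controlled by results already stated. The argument runs in the enriched truncated simulator, driven by the same public policy and random tape as the true algorithm, and is then transferred to the true oracle on $\mathcal B^c$.

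\textbf{Step 1 (stage count).} Since $N\le r_\rho J/2$ and $r_\rho\ge2$, \eqref{eq:stagecount} gives $\Prob(h_N\ge J)\le 1/4$. This rests on \Cref{cor:short}, which requires \eqref{eq:one-stage-D} with $B_*=N$. The first term in \eqref{eq:D-final} supplies exactly this. I would check that a stage's assigned calls never exceed $N$, so that the one-table horizon can be taken as $B_*=N$.

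\textbf{Step 2 (future alignments).} By \eqref{eq:future} and the second term in \eqref{eq:D-final}, $\Prob(\mathcal B)\le (N+1)Te^{-D/128}\le 1/128$. Here $\mathcal B$ includes the unqueried output. On $\mathcal B^c$, the public transcripts of the true and simulated interactions agree inductively via \Cref{lem:jet}, so the outputs $\widehat x$ coincide. On $\{h_N<J\}\cap\mathcal B^c$, the final prefix satisfies $h_N\le J-1<T-1$, so block $k=h_N+2\le T$ lies strictly beyond the final current stage $h_N+1$. Since we are on $\mathcal B^c$, that block of the output has alignment $q_k(\widehat x_k)\le a$. This is why $J<T$ is needed: it reserves an untouched stage for an arbitrary output. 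Summing the failure probabilities gives $1-1/4-1/128=95/128$.

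\textbf{Step 3 (scaling).} Replacing $x$ by $\beta x$ with $\beta>0$ in every query does not change the argument. It only reparametrizes the query points, and none of the bounds above depended on query radii, because $\norm{\Psi}<1$ was used uniformly. The alignment conclusion transfers to the scaled output.

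The main obstacle I expect is the transfer in Step 2. The events $\{h_N<J\}$ and $\mathcal B$ are defined in the simulator, while the conclusion concerns the true algorithm's output. The coupling must therefore be arranged so that on $\mathcal B^c$ the output is literally the same random variable in both interactions, and the output-alignment clause of $\mathcal B$ is evaluated at that common output. I would make this precise by induction on queries, ending with a final step for the output map.
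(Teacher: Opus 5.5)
Your proposal is correct and follows the paper's proof essentially step for step. You combine \eqref{eq:stagecount} (taking $B_*=N$ from the first term of \eqref{eq:D-final}) with \eqref{eq:future} (made at most $1/128$ by the second term), which gives $\{h_N<J\}\cap\mathcal B^c$ with probability at least $95/128$ in the truncated run. You then use $J<T$ to leave block $h_N+2$ strictly beyond the final current stage, transfer to the true interaction by the flat-gate coupling on $\mathcal B^c$, and handle scaling via $\|\Psi(\beta x)\|<1$, exactly as the paper does. The ``third'' bad event you name, coupling failure, is already contained in $\mathcal B$, which is why your final count correctly has only two terms.
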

\begin{proof}
Equations \eqref{eq:stagecount} and \eqref{eq:future} imply probability at least $1-1/4-1/128=95/128$ for $h_N<J$ and $\mathcal B^c$ in the truncated run. Since $J<T$, a block strictly beyond its final current stage remains; its output alignment is at most $a$. Transfer the event to the true public interaction by the preceding coupling. For input scaling $x\mapsto\beta x$, replace every alignment vector by $\Psi(\beta x)$, which is still bounded in norm by one.
\end{proof}

\section{Scaling and Complete Parameter Coverage}\label{sec:scaling}
Let $H_*=H_{p+1}$. Scale the components by
\begin{equation}\label{eq:scaling}
 f_i(y)=\alpha R_i(\beta y),\qquad
 \beta^p=\frac{L_p\rho}{6H_*\eps},\qquad
 \alpha=\frac{6\eps}{\beta}.
\end{equation}
Then
\begin{align}
 \Lip(\nabla^pf_i)&\le\frac{\alpha\beta^{p+1}H_*}{\rho}=L_p,\\
 F(0)-\inf F&\le2\alpha T,\\
 \min_jq_j(\beta y_j)<b&\quad\Longrightarrow\quad
 \norm{\nabla F(y)}>2\eps.
\end{align}
Choose
\begin{equation}\label{eq:T}
 T=\left\lfloor c_{T,p}\K\rho^{1/p}\right\rfloor,
 \qquad
 c_{T,p}=\frac1{2\cdot6^{1+1/p}H_*^{1/p}}.
\end{equation}
This guarantees the gap constraint exactly. If $c_{T,p}\K\rho^{1/p}\ge4$, reserve the last stage and set $J=T-1$. \Cref{prop:sequential} gives the lower term
\begin{equation}\label{eq:bias-balance}
 \Omega_p\!\left(\K\rho^{1/p}\min\{n,\rho^{-2}\}\right)
 =\Omega_p\!\left(\K\min\{n\rho^{1/p},\rho^{-2+1/p}\}\right).
\end{equation}
The first branch increases in $\rho$ and the second decreases. They balance at $\rho\asymp n^{-1/2}$.

For example, a universal threshold $n_0=2^{16}$ is sufficient for the bias and row-count conditions below. For $n\ge n_0$, choose $s$ with the parity of $n$ in $[\sqrt n,\sqrt n+2]$. Then
\[
 n^{-1/2}\le\rho\le2n^{-1/2},\qquad
 r_\rho\ge n/32768,\qquad
 T=\Theta_p(\K n^{-1/(2p)})
\]
whenever the last quantity is sufficiently large. Hence
\begin{equation}\label{eq:sharp-lower}
 \Omega_p\!\left(\K n^{1-1/(2p)}\right)
\end{equation}
queries are necessary with constant success probability. The probability of an $\eps$-stationary output under the hard distribution is at most $33/128<1/3$ for a sufficiently small multiple of this budget. The tables and directions are fixed before the algorithm runs. Averaging over this distribution and the algorithm's private randomness, then fixing an instance, gives a worst-case deterministic finite sum for each randomized algorithm.

\subsection{\texorpdfstring{The additive $n$ term}{The additive n term}, including arbitrarily small positive gap}
Choose a hidden index $I_*$ uniformly from $[n]$ and an independent hidden scalar $Z$ uniformly from $\{-3,-1,1,3\}$. In dimension one set
\begin{equation}\label{eq:hidden-index}
 f_i(x)=\frac\gamma2x^2-4n\eps Z\ind_{\{i=I_*\}}x,
 \qquad \gamma=\frac{72\eps^2}{\Delta}.
\end{equation}
Then
\[
 F(x)=\frac\gamma2x^2-4\eps Zx,\qquad
 F(0)-\inf F=\frac{Z^2\Delta}{9}\le\Delta.
\]
For every $p\ge2$ the $p$th-derivative Lipschitz constant is zero. Until the hidden index is queried, all replies are identical to those of the public quadratic $\gamma x^2/2$ and reveal nothing about $Z$. Under the all-quadratic transcript, at most $N$ distinct indices are tried, so the probability of hitting $I_*$ is at most $N/n$, even under adaptive index selection. The four possible success intervals
$|\gamma\widehat x-4\eps Z|\le\eps$ are pairwise disjoint. Thus
\[
 \Prob(\text{success})\le\frac{N}{n}+\frac14\le\frac5{16}<\frac13
 \qquad\text{if }N\le n/16.
\]
This proves $\Omega(n)$ for every $\Delta>0$, including when the nontrivial chain cannot fit into the gap budget. If $\K n^{-1/(2p)}$ is below a sufficiently large $p$-dependent constant, $S_p=O_p(n)$, so this single hidden-index family already lower-bounds the whole target expression.

\subsection{Small component counts}
For completeness, the finite set of component counts below the universal large-$n$ threshold does not require an external full-function lower bound. Set $\rho=1$ and take every row in table $j$ equal to $\Theta^{(j)}$, so every component is the same radial-chain function. The analytic estimates and exact-jet coupling remain valid. This case uses only those estimates and the future-alignment bound \eqref{eq:future}, not \Cref{lem:row-info}, whose small-bias hypothesis does not hold at $\rho=1$. At most one stage is promoted per query. If $N\le T-2$, at least one strictly future block remains at the output deterministically in the truncated interaction. Choose $D$ to make \eqref{eq:future} at most $1/128$. Scaling with \eqref{eq:scaling}--\eqref{eq:T} gives an $\Omega_p(\K)$ lower bound when $\K$ is large enough. Together with \eqref{eq:hidden-index}, this is $\Omega_p(n+\K n^{1-1/(2p)})$ for the finitely many remaining $n$, after decreasing the constant. Thus all $n\ge1$ are covered.

\begin{remark}[Dimension of a hard instance]
The constructions can be taken in finite dimension
\[
 d\le C_p\bigl(1+\K n^{-1/(2p)}\bigr)
 \log\bigl(2+n+\K n^{1-1/(2p)}\bigr).
\]
For large $n$ this follows directly from $d=TD$ and \eqref{eq:D-final}; the small-$n$ and hidden-index cases fit after adjusting $C_p$.
\end{remark}

\begin{proof}[Proof of \Cref{thm:main}]
The lower bounds in this section hold in the individual class. \Cref{prop:upper} gives the upper bound in the mean-squared class. Since \eqref{eq:individual} implies \eqref{eq:ms}, the sandwich in \eqref{eq:main} follows.

If $\Delta=0$, the initial point is a global minimizer of a differentiable function and is stationary. If $L_p=0$, every component is a polynomial of degree at most $p$, recovered in full by one $p$-jet query at the origin. Hence $n$ calls determine $F$ exactly. To see directly that an $\eps$-stationary point exists, let $\eta=\eps^2/(8\Delta)$ and minimize $F(x)+\eta\norm{x}^2/2$. This function is coercive because $F$ is bounded below. At a global minimizer $x_\eta$,
\[
 \nabla F(x_\eta)=-\eta x_\eta,\qquad
 \frac\eta2\norm{x_\eta}^2\le F(0)-\inf F\le\Delta,
\]
so $\norm{\nabla F(x_\eta)}\le\eps/2$. By continuity, a rational point with gradient norm below $\eps$ exists. For the known polynomial choose the first such point in a fixed enumeration of $\mathbb Q^d$. This is a measurable zero-additional-query computation. The hidden-index quadratic family proves the matching $\Omega(n)$ for $\Delta>0$.
\end{proof}

\section{Additional Technical Details}\label{sec:technical}
\subsection{Measurable model minimization}\label{sec:selector}
\begin{lemma}[A Borel model selector]\label[lemma]{lem:selector}
For fixed $r>0$, there is a Borel map from the finite-dimensional coefficient vector of \eqref{eq:localmodel} to a global minimizer on $\overline B(0,r)$.
\end{lemma}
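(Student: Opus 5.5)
The plan is to use the measurable selection theory for continuous functions on compact sets, and to make it constructive with a lexicographic tie-breaking rule. Let $c\in\R^{N}$ denote the coefficient vector, consisting of $v_t$ and the symmetric tensors $\nabla^jP(x_t)$ for $2\le j\le p$, with $M$ and $r$ fixed. The model $q(c,s)$ in \eqref{eq:localmodel} is jointly continuous in $(c,s)$; indeed, it is a polynomial in $c$ and $s$ plus the continuous term $M\norm s^{p+1}/(p+1)!$. The constraint set $K=\overline B(0,r)$ is compact and does not depend on $c$. First, I will show that the value function $\mu(c)=\min_{s\in K}q(c,s)$ is continuous, and hence Borel. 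This follows from uniform continuity of $q$ on $C\times K$ for every compact $C$, or alternatively by writing $\mu(c)=\inf_{s\in K\cap\mathbb Q^d}q(c,s)$, which is an infimum of countably many continuous functions and is therefore upper semicontinuous and Borel.

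Next, I will pick a single minimizer by a coordinatewise lexicographic rule. Define $A_0(c)=\{s\in K:q(c,s)=\mu(c)\}$. This set is nonempty and compact. For $k=1,\ldots,d$, recursively set $\sigma_k(c)=\min\{s_k:s\in A_{k-1}(c)\}$ and $A_k(c)=\{s\in A_{k-1}(c):s_k=\sigma_k(c)\}$. Each $A_k(c)$ is nonempty and compact, and $A_d(c)$ is a single point, which I take as the selector $\widehat s(c)$. By construction it is a global minimizer on $K$. It remains to prove that each $\sigma_k$ is Borel, because then $\widehat s=(\sigma_1,\ldots,\sigma_d)$ is Borel.

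The main obstacle is the measurability of $\sigma_k$, since the sets $A_{k-1}(c)$ vary only upper semicontinuously. I plan to handle this through the closed-graph property. The graph $\{(c,s):s\in A_0(c)\}=\{(c,s)\in\R^N\times K:q(c,s)-\mu(c)\le0\}$ is Borel; if $\mu$ is shown to be continuous, it is even closed. Because $K$ is compact, the correspondence $c\mapsto A_0(c)$ then has closed graph with compact values in a fixed compact set, so it is upper hemicontinuous. The function $\sigma_1(c)=\min_{s\in A_0(c)}s_1$ is lower semicontinuous: if $c_n\to c$ and $s^n\in A_0(c_n)$ attain $\sigma_1(c_n)$, a subsequence converges to some $s\in A_0(c)$, and along it $\liminf\sigma_1(c_n)\ge s_1\ge\sigma_1(c)$. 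Hence $\sigma_1$ is Borel. For $A_1$, the graph $\{(c,s):s\in A_0(c),\,s_1\le\sigma_1(c)\}$ is no longer closed, because $\sigma_1$ is only lower semicontinuous. I will therefore avoid continuity arguments at later stages and use a countable approximation instead. Write $\sigma_k(c)=\lim_{m\to\infty}\min\{s_k:s\in K,\ q(c,s)\le\mu(c)+1/m,\ |s_i-\sigma_i(c)|\le1/m\ (i<k)\}$. Each inner minimum is taken over a compact set and can be expressed as an infimum over a countable dense set of rationals using the Borel inequalities. Compactness makes the limit equal $\sigma_k(c)$. Induction on $k$ then gives Borel measurability.

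As an alternative if the approximation becomes delicate, I would invoke the Kuratowski--Ryll-Nardzewski selection theorem or the measurable maximum theorem (Aliprantis--Border, Thm.~18.19). Its hypotheses hold here: $q$ is a Carathéodory function, and $K$ is a constant compact-valued correspondence. That theorem gives a Borel selector of $\operatorname{argmin}$ directly. The lexicographic construction serves as the constructive version that the paper promises.
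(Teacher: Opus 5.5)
Your argument is correct, but it takes a different route from the paper's.

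\textbf{The paper's route.} It also starts from continuity of the value function, but then selects a minimizer by dyadic bisection. It fixes a closed cube containing the ball. At each depth it keeps the first of the $2^d$ closed subcubes $Q$ satisfying $\min_{s\in Q\cap\overline B(0,r)}q(c,s)=v(c)$. Because the cubes do not depend on $c$, each test is an equality between two continuous functions of $c$. Each depth has finitely many cube histories, and the selector is the limit of the shrinking nested sets, so it is a pointwise limit of finitely-valued Borel maps.

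\textbf{Your route and its cost.} You take the lexicographically smallest minimizer, which is a canonical choice. The price is that the later sets $A_{k-1}(c)$ depend on $c$ through the merely Borel functions $\sigma_i$. That is why you need the inductive slack-$1/m$ approximation, which is exactly the complication the fixed-cube construction avoids.

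\textbf{A detail to fix in that step.} The minimum of $s_k$ over the closed set $\{q(c,s)\le\mu(c)+1/m,\ |s_i-\sigma_i(c)|\le1/m\}$ need not equal the infimum over its rational points. For example, a strict local minimum of $q(c,\cdot)$ lying exactly at level $\mu(c)+1/m$ is invisible to rationals. The limit is unaffected, and the cleanest repair is to use strict inequalities:
\begin{itemize}
\item The resulting set is relatively open in $K$ and nonempty, since it contains $A_{k-1}(c)$, so its points in $K\cap\mathbb Q^d$ are dense in it.
\item The countable infimum is therefore Borel in $c$, is at most $\sigma_k(c)$, and is nondecreasing in $m$.
\item Your compactness argument then shows it converges to $\sigma_k(c)$.
\end{itemize}
With this scheme your separate lower-semicontinuity argument for $\sigma_1$ becomes unnecessary, since the same scheme already covers $k=1$.

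Your fallback via the measurable maximum theorem is also valid. It is, however, the non-constructive black box that the paper deliberately replaces with an explicit selection.
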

\begin{proof}
Write the coefficient vector as $c$ and the continuous objective as $q(c,s)$. Its minimum $v(c)$ over the fixed compact ball is continuous: uniform convergence of $q(c_k,\cdot)$ to $q(c,\cdot)$ on the ball holds whenever $c_k\to c$. Start with a closed cube containing the ball and subdivide it into finitely many closed cubes of half the side length. Select the first subcube whose intersection with the ball contains a minimizer, and repeat within that selected cube. For a fixed subcube $Q$, this selection test is Borel because
\[
 \min_{s\in Q\cap\overline B(0,r)}q(c,s)=v(c)
\]
is an equality of continuous functions when the intersection is nonempty; an empty intersection is always rejected. At each depth, there are finitely many possible cube histories, so every selection is Borel. The nested nonempty compact intersections with the minimizer set have diameters tending to zero. Their unique limit is a minimizer and a Borel function of $c$. This selector is an admissible exact-oracle decision rule.
\end{proof}

\subsection{Growth of the order-dependent constants}\label{sec:constant-growth}
Let $k\le p+1$. The envelope in \eqref{eq:gate-constants} obeys
\[
 E_{p,k}\le 8^k Z_p\,2^{p+1}(2p+2)^{k-1}
 \quad(k\ge1),\qquad
 Z_p=(2p+3)\binom{2p+2}{p+1}\le(2p+3)2^{2p+2}.
\]
For a partition $\pi$ of $[k]$ into $h$ nonempty blocks,
\[
 \prod_{V\in\pi} B_{|V|}
 \le 2^h4^k k^k,
\]
since $\prod_{V\in\pi}|V|^{|V|}\le k^k$. There are at most $k^k$ partitions. Substitution into \eqref{eq:composition-constants}--\eqref{eq:Hconstant} gives
\[
 \log(2+H_{p+1})\le C(p+1)\log(p+2)
\]
for a numerical $C$. These are explicit, dimension-free bounds; they are not optimized in $p$.

\subsection{Strictness of the mean-squared assumption}\label{sec:strictness}
For $n\ge2$, the inclusion from individual to mean-squared smoothness is strict even among bounded-below one-dimensional examples with arbitrarily small positive initial-gap allowance. For $\omega>0$, set
\[
 f_1(x)=\sqrt n\,L_p\omega^{-(p+1)}\sin(\omega x),
 \qquad f_i(x)=0\quad(i>1).
\]
The Lipschitz constant of $\nabla^p f_1$ is exactly $\sqrt n L_p$, while the normalized mean-squared increment is bounded by $L_p|x-y|$. Thus \eqref{eq:ms} holds but \eqref{eq:individual} with the same parameter does not. Moreover,
$F(0)-\inf F=L_p n^{-1/2}\omega^{-(p+1)}$, which is at most any prescribed $\Delta>0$ for sufficiently large $\omega$.

\subsection{From a hard distribution to a deterministic instance}
All hidden signs, tables, and the hidden-index choices are sampled once before interaction. For every fixed randomized decision rule with the stated budget, the preceding lower-bound proofs bound its success probability averaged jointly over these data and its private tape. If every deterministic realization had success probability at least $2/3$ over that tape, the joint average would also be at least $2/3$. The proved upper bound on joint success is smaller. Hence a fixed realization defeats the randomized rule. No fresh stochastic oracle, adaptive change of the underlying function, or restriction to deterministic algorithms is used.
\end{document}